\documentclass[11pt]{amsart}
\usepackage{amscd}
\usepackage{amsthm}
\usepackage[centertags]{amsmath}
\usepackage{amsfonts}
\usepackage{newlfont}
\usepackage{graphicx}
\usepackage[usenames]{color}
\usepackage{amsfonts, amssymb,accents}
\usepackage{mathrsfs}
\usepackage{latexsym}
\usepackage{bbold}
\usepackage{enumitem}
\usepackage{tikz}\usepackage{verbatim}
\usepackage{tabulary,tabularx}
\usepackage[all]{xy}
\usepackage{tikz}
\usetikzlibrary{positioning}
\usetikzlibrary{shapes.multipart}
\definecolor{colorcita}{RGB}{21,86,130}
\definecolor{colorref}{RGB}{5,10,177}
\definecolor{colorweb}{RGB}{177,6,38}

\newtheorem{theorem}{Theorem}[section]

\newtheorem{proposition}[theorem]{Proposition}
\newtheorem{corollary}[theorem]{Corollary}
\newtheorem{lemma}[theorem]{Lemma}
\newtheorem{remark}[theorem]{Remark}
\newtheorem{definition}[theorem]{Definition}

\newtheorem{question}[theorem]{Question}


\newcommand{\zN}{\mathbb N}

\renewcommand{\H}{H(\mathbb{C})}

\newcommand{\sub}{\subseteq}

\newcommand{\f}{\frac}

\newcommand{\AP}{\mathcal {AP}}
\newcommand{\A}{\mathcal {F}}

\fboxrule0.0001pt \fboxsep0pt

\textwidth=17.4cm \textheight=23cm \hoffset=-20.5mm \voffset=-5mm
\parskip 7.2pt

    \title{
    Multiple recurrence and hypercyclicity}

\begin{document}
\keywords{}
\subjclass[2010]{
47A16, 
    37B20  	
	37A44  
		11B25 
		47B37  	
}

\begin{abstract}
We study multiply recurrent and hypercyclic operators as a special case of $\mathcal F$-hypercyclicity, where $\mathcal F$ is the family  of subsets of the natural numbers containing arbitrarily long arithmetic progressions. We prove several properties  of hypercyclic multiply recurrent operators, we characterize those operators which are weakly mixing and multiply recurrent, and we show that there are operators that are multiply recurrent and hypercyclic but not weakly mixing.
\end{abstract}

\author{Rodrigo Cardeccia, Santiago Muro}
\address{DEPARTAMENTO DE MATEM\'ATICA - PAB I,
	FACULTAD DE CS. EXACTAS Y NATURALES, UNIVERSIDAD DE BUENOS AIRES, (1428) BUENOS AIRES, ARGENTINA AND IMAS-CONICET} \email{rcardeccia@dm.uba.ar} 
\address{FACULTAD DE CIENCIAS EXACTAS, INGENIERIA Y AGRIMENSURA, UNIVERSIDAD NACIONAL DE ROSARIO, ARGENTINA AND CIFASIS-CONICET}
\email{muro@cifasis-conicet.gov.ar}

\thanks{Partially supported by ANPCyT PICT 2015-2224, UBACyT 20020130300052BA, PIP 11220130100329CO and CONICET}

\maketitle
\section { Introduction }


Recurrence is one of the oldest notions in the theory of dynamical systems. It arose at the end of the IXX century with the Poincar\'e Recurrence Theorem. In the seventies Furstenberg introduced the concept of multiple recurrence and proved the Multiple Recurrence Theorems which had a profound impact in dynamical systems, ergodic theory and its applications to number theory and combinatorics.


In the 90's, a systematic study of the dynamics of the linear operators on infinite dimensional spaces began, and it has experienced a lively development in the last decades, see \cite{BayMat09,GroPer11}. The main concept in this theory is that of hypercyclicity: an operator is called hypercyclic if it has a dense orbit.
It has been proved for example that every infinite dimensional and separable Fr\'echet space supports a hypercyclic operator \cite {ansari1997existence,bernal1999hypercyclic}, 
or that there are hypercyclic operators $T$ such that $T\oplus T$ are no longer hypercyclic \cite {dlRRea09} (i.e. $T$ is not weakly mixing).  
Over the last years much of the attention was driven to special types of hypercyclicity like \textit {frequent hypercyclicity} \cite{BayGri06}, \textit{upper frequent hypercyclicity} \cite{Shk09} and more recently to $\A$\textit{-hypercyclicity} \cite{Bes16,BesMen19,BonGro18,bonilla2020frequently}, for more general families $\A$ of subsets of $\mathbb N$.

The notion of recurrent linear operators had not been systematically studied 
until the work Costakis, Manoussos and Parissis in \cite{CosManPar14}. 
Costakis and Parissis \cite{CosPar12} were also the first to study multiple recurrence in the context of linear dynamics. An operator is (topologically) \textit{multiply recurrent} provided that for every open set $U$ and every $m$, there is $k$ such that $\cap_{i=0}^m T^{-ik}(U)\neq \emptyset.$  In \cite{CosPar12}, the authors characterize the bilateral weighted shifts and adjoints of multiplication operators which are 
multiply recurrent and prove a result which assures that for certain sequences of scalars $(\lambda_n)_n$  the frequent hypercyclicity of the sequence  $(\lambda_nT^n)_n$ implies that $T$ itself is multiply recurrent. This notion was also studied in \cite{CheChu162,CheChu16,CheChu18}, where the authors studied different examples of multiply recurrent operators and in \cite{Pui18} where the author studied the relation between multiple recurrence and reiterative hypercyclicity.


In the present note we study multiply recurrent operators from the point of view of $\A$-hypercyclicity. 
Our point of departure is the observation that an operator is multiply recurrent if and only if it is  $\AP$-recurrent 
and that it is multiply recurrent and hypercyclic if and only if it is  $\AP$-hypercyclic, where $\AP$ stands for the family of natural numbers supporting arbitrarily long arithmetic progressions.  The study of sets having arbitrarily long arithmetic progressions 
is a central task in number theory and additive combinatorics. For instance, the celebrated Szemeredi's Theorem \cite{Sze75} and the Green-Tao Theorem \cite{GreTao08} establish that the sets having positive lower density  and  the set of prime numbers belong to $\AP$. 
Our observation is not a surprise, since  the study of sets with large arithmetic progressions has an intrinsic relation to ergodic theory, and it is well known that Szemeredi's Theorem can be proved (and is equivalent to)
Furstenberg's Multiple Recurrence Theorem.


We start showing some basic properties of $\AP$-hypercyclic operators, including a Birkhoff transitivity type result, an Ansari type theorem and the existence of $\AP$-hypercyclic operators on arbitrary separable infinite dimensional Fr\'echet spaces.
In \cite[Section 5]{CosPar12} it was shown that there are multiply recurrent and hypercyclic operators that are not frequently hypercyclic.  Using deep results by Gowers \cite{gowers2001new} and by Bayart and Matheron \cite{BayMat09Non}, we show the existence of multiply recurrent hypercyclic operators which are not even weakly mixing.  We also give some characterizations of the operators that are weakly mixing and $\AP$-hypercyclic, for example, in terms of an $\AP$-hypercyclicity criterion and of $\AP$-hereditary hypercyclicity. 
 In our final section we study $\mathcal F$-hypercyclicity for a related family, which we call $\overline{\AP}$. We show that while 
for a single operator this concept coincides with $\AP$-hypercyclicity, for sequences of operators both concepts differ. This allows us to prove an enhanced version of the main result in \cite{CosPar12}.

Let us recall some basic facts on $\mathcal F$-hypercyclicity. Given a  \textit {hereditary upward} family $\A\sub \mathcal P(\zN)$ (also called Furstenberg family) we say that an operator is $\A$-hypercyclic if there is $x\in X$ for which the sets $N_T(x,U):=\{n\in\zN: T^n(x)\in U\}$ of return times belong to $\A$. Thus, for example, if we take $\A$ to be the family of non empty sets, $\A$-hypercyclicity is simply hypercyclity.
Let us recall the following examples of $\A$-hypercyclicity, which are the most widely studied in the literature:
\begin{itemize}
\item $\underline{\mathcal{D}}=\{$ sets with positive lower density$\}$ (i.e. $A\in  \underline{\mathcal{D}}$ if $\underline {dens} (A):=$ $\liminf_n \# \f{\{k\leq n: k\in A\}}{n}>0$). An operator is frequently hypercyclic if and only if it is $ \underline{\mathcal{D}}$- hypercyclic. 
\item $\overline{\mathcal{D}}=\{$ sets with positive upper density$\}$ (i.e. $A\in  \overline{\mathcal{D}}$ if $\overline {dens} (A):=$ $\limsup_n \# \f{\{k\leq n: k\in A\}}{n}>0$). An operator is upper frequently hypercyclic if and only if it is $ \overline{\mathcal{D}}$- hypercyclic. 
\item $\overline{\mathcal{BD}}=\{$ sets with positive Banach upper density$\}$ (i.e. $A\in  \overline{\mathcal{BD}}$ if  $\lim_n \sup_k\# \f{\# A\cap [k,k+n]}{n}>0$). An operator is reiterative hypercyclic if and only if it is $ \overline{\mathcal{BD}}$- hypercyclic. 
\end{itemize}

For more $\A$-hypercyclicity see \cite{Bes16,BonGro18, bonilla2020frequently, cardeccia2020arithmetic,ErnEssMen21,Shk09,Pui17,Pui18}.
 The following general result was proved in \cite{BonGro18}.
\begin{theorem}[Bonilla-Grosse Erdmann]\label{equivalencias upper}
Let $\A$ be a an upper hereditary upward family and $T$ be a linear operator on a separable Fr\'echet space. Then the following are equivalent:
\begin{enumerate}[label=(\roman*)]
    \item For any open set $V$ there is $\delta$ such that for any open set $U$ there is $x\in U$ with $N_T(x,U)\in \A_\delta.$
    \item For any open set $V$ there is $\delta$ such that for every $U$ and $m$ there is $x\in U$  with $N_T(x,U)\in \A_{\delta,m}$.
    \item The set of $\A$-hypercyclic points is residual.
    \item $T$ is $\A$-hypercyclic.
\end{enumerate}
\end{theorem}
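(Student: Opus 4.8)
The plan is to prove the cyclic chain $(iii)\Rightarrow(iv)\Rightarrow(i)\Rightarrow(ii)\Rightarrow(iii)$, so that the genuine content lies in the last implication, a Birkhoff transitivity argument run through the Baire category theorem. Throughout I would exploit the defining structure of an upper hereditary upward family: a decomposition $\A=\bigcup_{\delta>0}\A_\delta$ with $\A_\delta=\bigcap_{m}\A_{\delta,m}$, in which each $\A_{\delta,m}$ is hereditarily upward and \emph{finitely determined} (membership of a set $A$ depends only on $A\cap[0,N(\delta,m)]$ for some bound $N(\delta,m)$), and in which each level $\A_\delta$ is stable under deleting a finite initial block. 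Two implications are then immediate: $(iii)\Rightarrow(iv)$ because a residual set is nonempty, and $(i)\Rightarrow(ii)$ because $\A_\delta\subseteq\A_{\delta,m}$ for every $m$, so the very same point $x$ witnesses $(ii)$.

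For $(iv)\Rightarrow(i)$, I would fix an $\A$-hypercyclic vector $x_0$, which is in particular hypercyclic. Given a target open set $V$, from $N_T(x_0,V)\in\A$ I extract a single $\delta$ with $N_T(x_0,V)\in\A_\delta$. For an arbitrary open $U$, density of the orbit of $x_0$ yields some $n_0$ with $T^{n_0}x_0\in U$; setting $x=T^{n_0}x_0\in U$, the return set $N_T(x,V)$ equals the left translate $(N_T(x_0,V)-n_0)\cap\zN$, which still lies in $\A_\delta$ by the finite-translation stability. This produces the constant $\delta$, uniform in $U$, required by $(i)$.

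The heart is $(ii)\Rightarrow(iii)$. I would fix a countable base $(U_j)_j$ of nonempty open sets and, for each $j$, let $\delta_j$ be the constant furnished by $(ii)$ for the target $U_j$. The key observation is that each set $O_{j,m}:=\{x:\ N_T(x,U_j)\in\A_{\delta_j,m}\}$ is \emph{open}: since $\A_{\delta_j,m}$ is finitely determined and upward closed, the condition $N_T(x,U_j)\in\A_{\delta_j,m}$ reduces to $N_T(x,U_j)\cap[0,N]$ containing one of finitely many minimal admissible patterns $F$, and $\{x:\ F\subseteq N_T(x,U_j)\}=\bigcap_{n\in F}T^{-n}(U_j)$ is open. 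Moreover $(ii)$ states exactly that each $O_{j,m}$ meets every open set, that is, each $O_{j,m}$ is dense. Hence $\bigcap_m O_{j,m}$ is a dense $G_\delta$, and therefore so is $\bigcap_j\bigcap_m O_{j,m}$; every $x$ in this residual set satisfies $N_T(x,U_j)\in\bigcap_m\A_{\delta_j,m}=\A_{\delta_j}\subseteq\A$ for all $j$, hence is $\A$-hypercyclic.

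I expect the main obstacle to be the openness step in $(ii)\Rightarrow(iii)$: it is precisely here that one must use both that $\A_{\delta,m}$ is decided by a bounded initial segment and that it is hereditarily upward, in order to avoid the closed conditions ``$T^nx\notin U_j$'' that a naive pattern-matching description would introduce. A secondary delicate point is the translation behaviour invoked in $(iv)\Rightarrow(i)$: one must check that the level $\A_\delta$ (and not merely $\A$ itself) is preserved under removing a finite initial block, a standard feature of density-type upper families that should nonetheless be recorded explicitly before it is used.
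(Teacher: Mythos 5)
Your proposal is essentially correct, but a direct comparison with ``the paper's own proof'' is impossible: the paper quotes this theorem from \cite{BonGro18} and gives no proof at all. Measured against the original source, your scheme --- the cycle $(iii)\Rightarrow(iv)\Rightarrow(i)\Rightarrow(ii)\Rightarrow(iii)$, with uniform left invariance powering $(iv)\Rightarrow(i)$ and a Baire-category argument on the open dense sets $O_{j,m}$ powering $(ii)\Rightarrow(iii)$ --- is exactly the Bonilla--Grosse-Erdmann argument. You also silently, and correctly, repaired a typo in the statement as printed: in $(i)$ and $(ii)$ the return set should be $N_T(x,V)$, not $N_T(x,U)$, since otherwise the quantifier over $V$ is vacuous; your proof consistently uses the corrected reading with target $U_j$ and arbitrary starting set $U$.

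Two points need adjustment, though neither is fatal. First, your reformulation of ``finitely hereditary upward'' as finite determinacy --- membership decided by $A\cap[0,N(\delta,m)]$ with a bound depending only on $(\delta,m)$ --- is strictly stronger than the axiom actually assumed, which provides, for each \emph{individual} $A\in\A_{\delta,m}$, a finite set $F=F(A)$ such that $F\cap A\subseteq B$ forces $B\in\A_{\delta,m}$. Consequently your description of $O_{j,m}$ by ``finitely many minimal admissible patterns'' is not available. But the openness you need is local and survives under the weaker axiom: if $x\in O_{j,m}$ and $A=N_T(x,U_j)$, take $F=F(A)$ and note that $W:=\bigcap_{n\in F\cap A}T^{-n}(U_j)$ is an open neighbourhood of $x$ with $N_T(y,U_j)\supseteq F\cap A$ for every $y\in W$, hence $W\subseteq O_{j,m}$; replace the global pattern description by this pointwise argument and the step is sound (indeed simpler). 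Second, the translation stability you defer to the end as ``a standard feature \ldots\ to be recorded'' is not something to verify: uniform left invariance is part of the definition of an upper family, and it must be invoked in the correct order in $(iv)\Rightarrow(i)$ --- choose $\delta$ so that $N_T(x_0,V)-n\in\A_\delta$ for \emph{all} $n$ simultaneously (this is precisely what the axiom grants), rather than first fixing some $\delta$ with $N_T(x_0,V)\in\A_\delta$ and then hoping the translates remain in that same level. Finally, one routine line is missing at the end of $(ii)\Rightarrow(iii)$: a point of the residual set controls only the basic open sets $U_j$, and one passes to an arbitrary nonempty open $U$ via upward heredity, since $N_T(x,U)\supseteq N_T(x,U_j)\in\A$ for some $U_j\subseteq U$.
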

Recall that a hereditary upward family is said to be \textit {upper} provided that $\emptyset\notin \A$ and $\A$ can be written as 
\begin{align}\label{def: upper family}
\bigcup_{\delta\in D} A_\delta,\quad\textrm{with }\; \A_\delta=  \bigcap_{m\in M} \A_{\delta,m},
\end{align}
where $M$ is countable and such that the families $\A_{\delta,m} $ and $\A_{\delta}$ satisfy
\begin{itemize}
\item each $\A_{\delta,m}$ is \textit{finitely hereditary upward}, that means that for each $A\in \A_{\delta,m}$, there is a finite set $F$ such that $F\cap A\sub B,$ then $B\in \A_{\delta,m}$;
\item $\A_\delta$ is \textit{uniformly left invariant}, that is, if $A\in\A$ then there is $\delta$ such that for every $n$, $A-n\in \A_\delta$. 
\end{itemize}
For example the families $\overline{\mathcal D},\overline{\mathcal {BD}}$ are upper while $ \underline{\mathcal{D}}$ is not upper (see \cite{BonGro18}).


\section{$\mathcal{AP}$-hypercyclic operators and multiple recurrence - Basic properties}\label{seccion alap}

An operator is said to be (topologically) \emph{ multiply recurrent} provided that for every open set $U$ and every $m$ there is $k$ such that
$$U\cap T^{-k}(U)\cap\dots \cap T^{-km}(U)\neq \emptyset.$$

Recall that the arithmetic progression of length $m+1$ ($m\in\mathbb N$), common difference $k\in\mathbb N$ and initial term $a\in\mathbb N$ is the subset of $\mathbb N$ of the form $\{a,a+k,a+2k,\dots,a+mk\}$.
 We denote by $\AP$ the (Furstenberg) family of subsets of the natural numbers that contain \textit {arbitrarily long arithmetic progressions}.


 We will see now that multiple recurrence may be studied from the $\A$-hypercyclicity point of view, 
indeed, our next result observes that $\AP$-hypercyclicity is equivalent to multiple recurrence plus hypercyclicity. 
This concept was also recently studied in \cite{kwietniak2017multi} for compact dynamical systems.
The family $\AP$ is an upper Furstenberg family: just let, in \eqref{def: upper family}, $\A_m$ be the family of subsets with arithmetic progressions of length greater than $m$ and let $\A_\delta=\AP$.
Applying Theorem \ref{equivalencias upper} we have the following (see also \cite[Proposition 4.14]{kwietniak2017multi}).
\begin{proposition}\label{equivalencias alap}
Let $T$ be a linear operator on a separable Fr\'echet space. Then the following are equivalent.
\begin{enumerate}[label=(\roman*)]
\item $T$ is hypercyclic and every hypercyclic vector is $\AP$-hypercyclic.
\item  There is an $\AP$-hypercyclic vector.
\item $T$ is hypercyclic and multiply recurrent.
\item  For each pair of nonempty open sets $U,V$ and each $m>0$ there exists $x\in U$ such that $N_T(x,V)$ has an arithmetic progression of length $m$.
\item  The set of $\AP$-hypercyclic vectors is residual. 
\end{enumerate}
\end{proposition}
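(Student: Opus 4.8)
The plan is to obtain the equivalences among (ii), (iv) and (v) straight from the Bonilla--Grosse-Erdmann theorem, and then to splice in conditions (i) and (iii) by elementary dynamical arguments; the one genuinely new point is that multiple recurrence forces \emph{every} hypercyclic vector to be $\AP$-hypercyclic. Recall from the discussion preceding the statement that $\AP$ is upper with a single index $\delta$ (so $\A_\delta=\AP$) and $\A_{\delta,m}$ the family of sets carrying an arithmetic progression of length $>m$. With these identifications Theorem \ref{equivalencias upper} specializes directly: its condition (iv) is our (ii), its condition (iii) is our (v), and its condition (ii) unwinds to our (iv). This already yields (ii) $\Leftrightarrow$ (iv) $\Leftrightarrow$ (v), so the remaining task is to weave in (i) and (iii).

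First I would record the cheap implications. For (i) $\Rightarrow$ (ii), a hypercyclic vector exists and is $\AP$-hypercyclic by hypothesis. For (ii) $\Rightarrow$ (iii): if $y$ is $\AP$-hypercyclic then every $N_T(y,U)$ is nonempty, so the orbit of $y$ is dense and $T$ is hypercyclic; and given $U$ and $m$, the set $N_T(y,U)\in\AP$ contains a progression $\{a,a+k,\dots,a+mk\}$, whence $T^a y\in U\cap T^{-k}(U)\cap\dots\cap T^{-mk}(U)$, which gives multiple recurrence.

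The crux is (iii) $\Rightarrow$ (i). Fix a hypercyclic vector $x$, an open set $U$, and $m\in\zN$. By multiple recurrence choose $k$ with $W:=\bigcap_{j=0}^{m}T^{-jk}(U)\neq\emptyset$. The decisive observation is that $W$ is \emph{open}, being a finite intersection of preimages of $U$ under the continuous maps $T^{jk}$. Density of the orbit of $x$ then yields $n$ with $T^n x\in W$, so $T^{n+jk}x\in U$ for $0\le j\le m$; that is, $\{n,n+k,\dots,n+mk\}\subseteq N_T(x,U)$ is a progression of length $m+1$. As $m$ and $U$ were arbitrary, $N_T(x,U)\in\AP$ for every open $U$, so $x$ is $\AP$-hypercyclic. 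Thus every hypercyclic vector is $\AP$-hypercyclic, which is (i).

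Finally I would recover (v) independently if desired: under (i) the $\AP$-hypercyclic vectors coincide with the hypercyclic vectors, and the latter form a residual set for a hypercyclic operator on a separable Fr\'echet space. The main obstacle is exactly the passage in (iii) $\Rightarrow$ (i) from the bare existence of \emph{one} multiply recurrent point to a conclusion about an \emph{arbitrary} hypercyclic vector; this is overcome not by transferring recurrence from point to point, but by exploiting that the witness set $W$ is open, so that the dense orbit of $x$ is forced to enter it. I note in passing that (iii) $\Rightarrow$ (iv) can also be proved directly by combining topological transitivity with the same open set $W$, bypassing Theorem \ref{equivalencias upper} entirely should a self-contained argument be preferred.
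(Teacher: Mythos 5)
Your proposal is correct and follows essentially the same route as the paper: the equivalences involving (iv) and (v) are delegated to Theorem \ref{equivalencias upper}, and the crux (iii) $\Rightarrow$ (i) is proved exactly as in the paper, by noting that the multiple-recurrence witness set $W=\bigcap_{j=0}^{m}T^{-jk}(U)$ is a nonempty \emph{open} set which the dense orbit of any hypercyclic vector must enter, producing the arithmetic progression $\{n,n+k,\dots,n+mk\}$ in $N_T(x,U)$. The only difference is cosmetic bookkeeping in which implications are chained versus read off directly from the Bonilla--Grosse-Erdmann theorem.
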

\begin{proof}
 $(i) \Rightarrow (ii) \Rightarrow (iii) \Rightarrow (iv)$ and $(v)\Rightarrow (ii)$  are all straightforward; also $(iv) \Rightarrow (v)$ is a direct consequence of Theorem \ref{equivalencias upper}.
 
 $(iii)\Rightarrow (i)$. Let $x$ be a hypercyclic vector and $U$ a nonempty open set. Let $m>0$. Thus, there is $k_2$ such that $V:=U\cap T^{-k_2}(U)\cap\dots \cap T^{-k_2m}(U)\neq \emptyset$ and hence $N(x,V)\neq \emptyset.$ If $k_1\in N(x,V)$ it follows that for every $j\leq m$, $T^{k_1+jk_2}(x)\in U$ for every $j\leq m$.
 \end{proof}

Recall that, given a family $\A$ of natural numbers, a vector $x$ is said to be $\A$- recurrent for $T$ provided that for every open set $U$ containing $x$ the set of hitting times  $N_T(x,U)\in \A$ (see \cite{Fur81,bonilla2020frequently}). If the set of $\A$-recurrent vectors is dense, then the operator is said to be $\A$-recurrent. The next result can also be found in \cite[Lemma 4.8]{kwietniak2017multi}.
\begin{proposition}\label{multiple rec implica AP-rec}
Let $T$ be a linear operator.
The following are equivalent:
\begin{itemize}
    \item [(i)] $T$ is multiply recurrent,
    \item [(ii)] $T$ is $\AP$-recurrent.

\end{itemize}
\end{proposition}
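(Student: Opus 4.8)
The plan is to prove the two implications separately, with a Baire category argument carrying the nontrivial direction. Throughout I work with a fixed complete metric $d$ compatible with the topology of the underlying (Fr\'echet) space $X$, so that $X$ is a Baire space and the balls $B(x,1/l)$, $l\in\zN$, form a neighbourhood base at each point $x$.

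For $(ii)\Rightarrow(i)$ I would argue directly. Fix a nonempty open set $U$ and $m\in\zN$. Since the $\AP$-recurrent vectors are dense, choose an $\AP$-recurrent $x\in U$; then $N_T(x,U)\in\AP$, so it contains an arithmetic progression $\{a,a+k,\dots,a+mk\}$ of length $m+1$ with $k\geq 1$. Setting $y=T^a(x)$, the membership $a+jk\in N_T(x,U)$ for $j=0,\dots,m$ gives $T^{jk}(y)=T^{a+jk}(x)\in U$, so $y\in U\cap T^{-k}(U)\cap\dots\cap T^{-mk}(U)$ and $T$ is multiply recurrent.

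For the converse $(i)\Rightarrow(ii)$ the key move is to rephrase $\AP$-recurrence metrically so that it becomes a countable intersection of open sets. For $l,m\in\zN$ set
$$U_{l,m}=\{x\in X:\ \{n\geq 0:\ d(T^n x,x)<1/l\}\ \text{contains an AP of length }m\}.$$
Writing $U_{l,m}=\bigcup_{a\geq 0,\,k\geq 1}\bigcap_{j=0}^{m-1}\{x:\ d(T^{a+jk}x,x)<1/l\}$ exhibits it as a union of finite intersections of open sets, hence $U_{l,m}$ is open; and since the balls $B(x,1/l)$ are a neighbourhood base and $\AP$ is hereditary upward, a point $x$ is $\AP$-recurrent if and only if $x\in R:=\bigcap_{l,m}U_{l,m}$, a $G_\delta$ set. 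It then remains to show each $U_{l,m}$ is dense: given a nonempty open $V$, shrink it to a ball $V'\subseteq V$ of diameter $<1/l$, apply multiple recurrence to $V'$ to obtain $k\geq 1$ and a point $z\in V'\cap T^{-k}(V')\cap\dots\cap T^{-(m-1)k}(V')$, and note that $z,T^k z,\dots,T^{(m-1)k}z\in V'$ forces $d(T^{jk}z,z)<1/l$ for all $j$, so $z\in U_{l,m}\cap V$. By Baire's theorem $R$ is residual, hence dense, and $T$ is $\AP$-recurrent.

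The only genuinely delicate point is the localization in the converse: recurrence demands return to every neighbourhood of the \emph{same} point $x$, so a crude union over a countable base only produces, at each scale, return to \emph{some} basic set rather than to all of them simultaneously. The metric reformulation through $d(T^n x,x)$ is exactly what repairs this, turning each return condition into an honestly open set $U_{l,m}$ to which Baire's theorem applies; multiple recurrence then supplies the density of $U_{l,m}$ once we pass to open sets of small diameter.
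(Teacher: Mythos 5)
Your argument is correct, and both implications are sound, but your proof of $(i)\Rightarrow(ii)$ is organized differently from the paper's. The paper argues by hand inside a fixed open set $U$: using multiple recurrence it builds by induction a nested sequence of balls $\overline{U_m}\sub U_{m-1}$ with radii tending to $0$, together with steps $k_m$, such that $T^{jk_m}(U_m)$ stays inside the preceding ball for all $j\le m$, and then verifies directly that a point of $\bigcap_m\overline{U_m}$ (which exists by completeness) is an $\AP$-recurrent vector lying in $U$. This is precisely the nested-ball proof of Baire's theorem unrolled in this particular situation. You instead package the argument as a genuine Baire-category proof: the metric reformulation via $d(T^nx,x)$ exhibits the set of $\AP$-recurrent vectors as the $G_\delta$ set $\bigcap_{l,m}U_{l,m}$, and the density of each $U_{l,m}$ follows from exactly the same key observation that drives the paper's induction, namely that multiple recurrence applied to a ball of diameter $<1/l$ converts simultaneous returns to that ball into returns to within $1/l$ of the point itself. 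The two proofs thus share their essential mechanism (completeness plus the small-ball trick), but your decomposition records a slightly stronger conclusion than the paper states: the $\AP$-recurrent vectors form a residual set, not merely a dense one, which is the recurrence analogue of item (v) of Proposition \ref{equivalencias alap}. One cosmetic point: your progressions $\{0,k,\dots,(m-1)k\}$ begin at $0$; if return times are required to be positive, simply discard the initial term, which changes nothing since $m$ is arbitrary.
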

\begin{proof}
$(ii) \Rightarrow (i)$ is straightforward.
For the converse, let $U$ be an open set. By the multiple recurrence of $T$ we may construct by induction open balls $U_m$ and steps $k_m$ such that
\begin{itemize}
    \item $\overline{U_m}\sub U_{m-1}\ldots \sub U$;
    \item for every $j\leq m$ $T^{j k_m}( U_m)\sub U_{m}$ and
    \item the radius of $U_m$ tends to zero.
\end{itemize}
Let $x\in \bigcap_m \overline {U_m}$. Then $x$ is an $\AP$-recurrent vector. Indeed, given an open set $V$ containing $x$ and $m>0$ then there is $m'>m$ such that $U_{m'}\sub V$. Hence, for every $j<m'$ we have that $T^{j k_m'}(x)\in U_{m'}\sub V$.
\end{proof}

In \cite{CosPar12}, the authors showed   examples of a hypercyclic bilateral weighted shift on $\ell_p$ (hence weakly mixing)  which is not  multiply recurrent and a bilateral weighted shift  which is   multiply recurrent and hypercyclic but not frequently hypercyclic. Since for weighted backward shifts frequent hypercyclicity is equivalent to reiterative hypercyclicity \cite{Bes16}, it follows that weakly mixing does not imply $\mathcal{AP}$-hypercyclicity and $\mathcal{AP}$-hypercyclicity does not imply reiterative hypercyclicity. We will see in Theorem \ref{alap no weakly} that there are $\AP$-hypercyclic operators that are not weakly mixing.

On the other hand any chaotic operator is 
$\AP$-hypercyclic. 
Moreover, $\AP$-hypercyclicity is also implied by 
reiterative hypercyclicity. This follows from a direct application of Szemeredi's Theorem \cite{Sze75}.
\begin{proposition}
Let $T$ be a reiterative hypercyclic operator. Then $T$ is $\AP$-hypercyclic.
\end{proposition}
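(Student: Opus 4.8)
The plan is to deduce the single containment of Furstenberg families $\overline{\mathcal{BD}}\sub\AP$ and then read off the conclusion directly from the definitions, with no further dynamical input. By hypothesis $T$ is $\overline{\mathcal{BD}}$-hypercyclic, so there exists a vector $x$ with the property that $N_T(x,U)$ has positive Banach upper density for every nonempty open set $U$. I claim that this \emph{same} $x$ is $\AP$-hypercyclic; for this it suffices to verify that each such $N_T(x,U)$ contains arbitrarily long arithmetic progressions.

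The key step is Szemer\'edi's Theorem in its finitary form: for each $\delta>0$ and each length $\ell$ there is $N=N(\delta,\ell)$ such that any subset of $\{1,\dots,N\}$ of cardinality at least $\delta N$ contains an arithmetic progression of length $\ell$. I would apply this as follows. Fix a nonempty open $U$ and write $A:=N_T(x,U)$, with Banach upper density $d>0$. Put $\delta:=d/2$. Since the defining limit of the Banach upper density equals $d$, for all sufficiently large $n$ one has $\sup_k \#(A\cap[k,k+n])/n \geq \delta$, so there are arbitrarily long intervals $[k,k+n]$ on which $A$ has relative density at least $\delta$. Choosing such an interval with $n\geq N(\delta,\ell)$ and translating it onto $\{1,\dots,N\}$ (arithmetic progressions being preserved by translation) yields an arithmetic progression of length $\ell$ contained in $A$. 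As $\ell$ was arbitrary, $A\in\AP$.

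Since $U$ was arbitrary, we conclude that $N_T(x,U)\in\AP$ for every nonempty open set $U$, so $x$ is $\AP$-hypercyclic and hence $T$ is $\AP$-hypercyclic. I do not expect any genuine obstacle here: the entire content of the statement is the inclusion $\overline{\mathcal{BD}}\sub\AP$, which is precisely Szemer\'edi's Theorem. The only point deserving a word of care is that positive \emph{Banach} upper density (rather than ordinary upper density) already suffices to force long progressions, and this is exactly what the finitary formulation delivers, since it only requires a single sufficiently long window of relative density bounded below.
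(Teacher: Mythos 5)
Your proof is correct and is exactly the argument the paper has in mind: the paper offers no written proof beyond the remark that the proposition ``follows from a direct application of Szemer\'edi's Theorem,'' i.e.\ from the inclusion $\overline{\mathcal{BD}}\subseteq\AP$, which is precisely what you establish. Your use of the finitary form to pass from positive Banach upper density (a single long window of relative density bounded below) to arbitrarily long progressions is the right way to fill in that one-line citation.
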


A typical problem in $\A$-hypercyclicity is to determine whether $T^{-1}$ and $T^p$ are $\A$-hypercyclic provided that $T$ is $\A$-hypercyclic. For $\AP$-hypercyclicity we obtain an easy answer.
\begin{proposition}
	Let $T$ be an invertible $\mathcal{AP}$-hypercyclic operator on a Fr\'echet space. Then $T^{-1}$ is $\mathcal{AP}$-hypercyclic.
\end{proposition}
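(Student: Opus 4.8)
The plan is to reduce everything to Proposition \ref{equivalencias alap} and to exploit the symmetry under inversion of the two properties it isolates. By that proposition, $\mathcal{AP}$-hypercyclicity of $T$ is equivalent to $T$ being simultaneously hypercyclic and multiply recurrent, and the same characterization applies verbatim to $T^{-1}$. So it suffices to show that $T^{-1}$ inherits each of these two properties separately; the conclusion then follows by applying Proposition \ref{equivalencias alap} to $T^{-1}$.

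First I would handle hypercyclicity. On a separable Fr\'echet space hypercyclicity coincides with topological transitivity, and for an invertible operator transitivity is symmetric: for nonempty open sets $U,V$ one has $T^n(U)\cap V\neq\emptyset$ if and only if $U\cap T^{-n}(V)\neq\emptyset$, that is, $(T^{-1})^n(V)\cap U\neq\emptyset$. Swapping the roles of $U$ and $V$ shows that $T^{-1}$ is transitive, hence hypercyclic. This step is classical and I do not expect difficulty.

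The core of the argument is that multiple recurrence is preserved, and the key observation is simply that reversing a finite arithmetic progression yields another arithmetic progression with the \emph{same} common difference. Concretely, fix a nonempty open set $U$ and $m>0$, and use the multiple recurrence of $T$ to obtain $k$ and a point $p$ with $T^{jk}p\in U$ for all $0\le j\le m$. Setting $q:=T^{mk}p$, we get $q\in U$ and $(T^{-1})^{jk}q=T^{(m-j)k}p\in U$ for all $0\le j\le m$ (since $m-j$ again ranges over $\{0,\dots,m\}$), so $q$ witnesses $U\cap (T^{-1})^{-k}(U)\cap\cdots\cap(T^{-1})^{-mk}(U)\neq\emptyset$. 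Thus $T^{-1}$ is multiply recurrent, with exactly the same steps $k$ as $T$.

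Combining the two steps, $T^{-1}$ is hypercyclic and multiply recurrent, so Proposition \ref{equivalencias alap} yields that $T^{-1}$ is $\mathcal{AP}$-hypercyclic. The only point requiring care is the bookkeeping in the reversal step: one must note that the reversed orbit segment of $T$ is precisely a forward orbit segment of $T^{-1}$, so arithmetic progressions in the return sets transfer with identical common differences, and no new transitivity or recurrence needs to be established beyond what $T$ already provides.
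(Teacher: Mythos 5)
Your proposal is correct and follows essentially the same route as the paper: both establish hypercyclicity of $T^{-1}$ by the classical inverse/transitivity argument and then transfer multiple recurrence by the same reversal trick, taking the endpoint $y=T^{mk}(x)$ of a progression for $T$ so that $(T^{-1})^{jk}(y)=T^{(m-j)k}(x)\in U$, concluding via Proposition \ref{equivalencias alap}. The only cosmetic difference is that you invoke that proposition explicitly for both directions, whereas the paper leaves the final appeal to it implicit.
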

\begin{proof}
	Since $T$ is hypercylic it follows that $T^{-1}$ is hypercyclic.
	Let $m\in\zN$ and $U$ be an open set. Since $T$ is $\mathcal{AP}$-hypercyclic there is $x\in U$ and $n\in\mathbb N$ such that $T^{jn}x\in U$ for every $j\leq m$. Let $y= T^{mn}(x)$. It follows that $T^{-jn}(y)=T^{(m-j)n}(x)\in U$ for every $0\leq j \leq m$. 
\end{proof}
Shkarin in \cite[Section 5]{Shk09} proved that for any right shift invariant Ramsey family $\A$, if an operator is $\A$-hypercyclic then so are its powers and its rotations (see also \cite{bonilla2020frequently}). Thus by van Waerden's theorem we have the following.
\begin{proposition}\label{Ansari AP}
	Let $T$ be an $\mathcal{AP}$-hypercyclic operator on a Fr\'echet space. Then $T^p$ and $\lambda T$ are $\mathcal{AP}$-hypercyclic, for any $p\in\mathbb N$ and $|\lambda|=1$. Moreover, they share the $\AP$-hypercyclic vectors.
\end{proposition}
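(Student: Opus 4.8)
The plan is to deduce the statement directly from the cited theorem of Shkarin, so that the whole task reduces to verifying two combinatorial properties of the family $\AP$: right shift invariance and the Ramsey property. Shkarin's result then guarantees not only that $T^p$ and $\lambda T$ are $\AP$-hypercyclic but also that these operators share their $\AP$-hypercyclic vectors, which is exactly the content of the proposition.

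First I would dispose of right shift invariance, which is immediate. If $A\in\AP$ and $n\in\zN$, then every arithmetic progression $\{a,a+k,\dots,a+mk\}\subseteq A$ is translated to $\{a+n,a+n+k,\dots,a+n+mk\}\subseteq A+n$, an arithmetic progression of the same length; since $A$ contains progressions of every length, so does $A+n$, whence $A+n\in\AP$.

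The substantive step is the Ramsey property, and this is precisely where van der Waerden's theorem enters. Recall that $\AP$ is Ramsey if, whenever $A\in\AP$ is written as a finite union $A=A_1\cup\dots\cup A_r$, some $A_i$ already lies in $\AP$. To see this, fix $\ell$ and let $N=W(r,\ell)$ be the van der Waerden number, so that every partition of $\{0,1,\dots,N-1\}$ into $r$ classes has a class containing an arithmetic progression of length $\ell$. Since $A\in\AP$, choose inside $A$ an arithmetic progression $\{a,a+k,\dots,a+(N-1)k\}$ of length $N$. The affine map $i\mapsto a+ik$ carries $\{0,\dots,N-1\}$ onto this progression and sends arithmetic progressions to arithmetic progressions, so the decomposition $A=A_1\cup\dots\cup A_r$ pulls back to an $r$-colouring of $\{0,\dots,N-1\}$; van der Waerden then produces an arithmetic progression of length $\ell$ lying entirely in a single $A_{i}$. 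As $\ell$ ranges over $\zN$, only finitely many indices are available, so by pigeonhole one index $i$ occurs for arbitrarily large $\ell$; for that $i$ the set $A_i$ contains arbitrarily long progressions, i.e. $A_i\in\AP$.

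With both properties checked, the proposition follows at once from Shkarin's theorem. I expect the pigeonhole at the end of the Ramsey verification to be the only point requiring a little care: the construction yields, for each length $\ell$, a piece $A_{i(\ell)}$ witnessing a progression of length $\ell$, and one must rule out the possibility that the witnessing index keeps changing with no single piece carrying unbounded progressions --- equivalently, that each $A_i$ admits a finite bound on its progression lengths, which is incompatible with $A=\bigcup_i A_i\in\AP$.
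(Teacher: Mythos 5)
Your proof is correct and is essentially the paper's own argument: the paper likewise deduces the proposition from Shkarin's theorem on right shift invariant Ramsey families, invoking van der Waerden's theorem to see that $\AP$ qualifies. The only difference is that the paper leaves the van der Waerden verification implicit, whereas you spell out the partition-regularity argument (affine pullback of the colouring plus the pigeonhole over indices), and that verification is sound.
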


Recall that a set of operators $\{T_1,\dots,T_r\}$ is said to be $d$-transitive (respectively, $d$-mixing) if for every nonempty open set $U$ and every nonempty open sets $U_i$, $1\le1\le r$, there is $n$  such that (respectively, there is $N$ such that for every $n\ge N$) 
$$U\cap \bigcap_{i\le r} T_i^{-n}(U_i)\ne\emptyset.$$
The study of disjointness for tuples of linear operators began in \cite{Ber07,BesPer07}.
It is clear that if an operator $T$ is such that $\{T, T^2, \dots ,T^m\}$ is $d$-transitive for every $m$ then $T$ is $\AP$-hypercyclic.

In \cite{BesMarPerShk12} the authors showed that any operator $T$ such that $T-I$ is a backward shift satisfies that $\{T, T^2, \dots ,T^m\}$ is $d$-mixing for every $m$. Note that this in particular answers \cite[Question 7.1]{CosPar12}.
Since in 
every infinite dimensional and separable Fr\'echet space there exists an operator that is quasiconjugated to the the sum of a weighted shift and the identity on $\ell_1$ (\cite{BonePer98}), we have as a corollary
an existence result for $\AP$-hypercyclic operator.
\begin{corollary}\label{existe ALAP}
Let $X$ be an infinite dimensional separable Fr\'echet space. Then there exists an $\mathcal{AP}$-hypercyclic operator.
\end{corollary}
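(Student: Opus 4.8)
The plan is to first construct a concrete $\AP$-hypercyclic operator on the model space $\ell_1$ and then transport it to an arbitrary $X$ by quasiconjugacy. On $\ell_1$ I would take $T=I+B$, where $B$ is a bounded weighted backward shift, so that $T-I=B$ is a backward shift. By the result of \cite{BesMarPerShk12} recalled above, the tuple $\{T,T^2,\dots,T^m\}$ is then $d$-mixing, and in particular $d$-transitive, for every $m$. As already noted in the text, $d$-transitivity of $\{T,\dots,T^m\}$ for all $m$ produces an $\AP$-hypercyclic vector: unravelling the definition, for nonempty open sets $U,V$ and each $m$ one obtains $x\in U$ and $n$ with $T^{in}x\in V$ for $1\le i\le m$, so that $N_T(x,V)$ contains the arithmetic progression $\{n,2n,\dots,mn\}$. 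This is exactly condition (iv) of Proposition \ref{equivalencias alap}, so $T$ is $\AP$-hypercyclic on $\ell_1$.

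Next, for a general infinite-dimensional separable Fr\'echet space $X$, I would invoke \cite{BonePer98} to obtain an operator $S$ on $X$ that is quasiconjugate to an operator $T$ of the above form on $\ell_1$; that is, a continuous map $\phi\colon\ell_1\to X$ with dense range satisfying $\phi\circ T=S\circ\phi$. It then remains only to transfer $\AP$-hypercyclicity along $\phi$.

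The single non-bookkeeping step is the observation that $\AP$-hypercyclicity is preserved under quasiconjugacy, which I would verify directly. If $x$ is $\AP$-hypercyclic for $T$, then for every nonempty open $U\sub X$ the preimage $\phi^{-1}(U)$ is open and, since $\phi$ has dense range, nonempty; moreover, from $S^n\phi(x)=\phi(T^nx)$ one gets the exact identity of return sets $N_S(\phi(x),U)=N_T(x,\phi^{-1}(U))$. As $x$ is $\AP$-hypercyclic for $T$, the right-hand side lies in $\AP$, whence $\phi(x)$ is $\AP$-hypercyclic for $S$. Combining this with the previous paragraph yields an $\AP$-hypercyclic operator on $X$.

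I expect no serious obstacle here: the substantive content is already packaged in the cited existence theorems \cite{BesMarPerShk12,BonePer98}, and the remaining work is the routine remark that preimages of open sets under a continuous dense-range map are nonempty and open and that the return-time sets are preserved verbatim, so membership in the family $\AP$ carries over unchanged. The only point requiring mild care is to make sure the quasiconjugacy is oriented so that it transfers the dynamics \emph{from} the model space $\ell_1$ to $X$ (i.e. $\phi$ maps out of $\ell_1$), which is precisely the form in which the result of \cite{BonePer98} is stated.
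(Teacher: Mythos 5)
Your proposal is correct and takes essentially the same route as the paper: the corollary there is likewise obtained by combining the $d$-mixing result of \cite{BesMarPerShk12} for operators $T$ with $T-I$ a backward shift on $\ell_1$, the observation that $d$-transitivity of $\{T,T^2,\dots,T^m\}$ for all $m$ yields $\AP$-hypercyclicity, and the quasiconjugacy to an operator on $X$ provided by \cite{BonePer98}. The two steps you spell out---deducing condition (iv) of Proposition \ref{equivalencias alap} from $d$-transitivity, and the preservation of the return sets $N_S(\phi(x),U)=N_T(x,\phi^{-1}(U))$ under a continuous dense-range intertwining map $\phi\colon\ell_1\to X$---are exactly the routine verifications the paper leaves implicit.
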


The most efective tool to prove that an operator is hypercyclic is to show that it satisfies the hypercyclicity criterion. In \cite{Ber07,BesPer07}, the authors introduced $d$-hypercyclic operators. In particular if  $(T, T^2, \dots ,T^m)$ satisfy the $d$-hypercyclicity criterion for every $m$  then  $T$ is $\AP$-hypercyclic. 

Note that an arithmetic progression whose initial term coincides with the common difference is just a set of the form $\{q,2q,\dots,mq\}$ for some $q,m\in\mathbb N$. 
Therefore, by \cite[Section 2]{BesPer07} we have the following.
\begin{proposition}\label{criterio alap}
	Let $X$ be a separable Fr\'echet space and  $T$ an operator such that there exists a dense set $X_0\sub X$, a function $S:X_0\rightarrow X_0$ and a sequence $(m_k)_k\in\AP$,  which contains arbitrarily long arithmetic progressions whose initial term coincides with the common difference such that for each $x\in X_0$,
	\begin{enumerate}
		\item $T^{m_k}(x)\rightarrow 0$;
		\item  $S^{m_k}(x)\to 0$ and
		\item $TS(x)= x$.
	\end{enumerate}
Then $T$ is $\mathcal{AP}$-hypercyclic.
\end{proposition}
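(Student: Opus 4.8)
The plan is to establish condition (iv) of Proposition \ref{equivalencias alap}: given nonempty open sets $U,V$ and an integer $m>0$, I must produce a vector $x\in U$ whose return set $N_T(x,V)$ contains an arithmetic progression of length $m$. I will in fact produce a progression of the special shape $\{n,2n,\dots,mn\}$, that is, one whose initial term equals its common difference. This is precisely the reformulation of the requirement $U\cap\bigcap_{i\le m}(T^i)^{-n}(V)\ne\emptyset$, i.e. the $d$-transitivity of the tuple $(T,T^2,\dots,T^m)$ at a common time $n$, which explains why the hypothesis on $(m_k)_k$ is phrased through progressions whose first term coincides with the step, and why \cite[Section 2]{BesPer07} applies.

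First I would use the density of $X_0$ to choose $u\in U\cap X_0$ and $v\in V\cap X_0$. For a common step $n$ to be fixed later I set
$$x_n = u + \sum_{j=1}^{m} S^{jn}(v).$$
Since $S$ maps $X_0$ into $X_0$ and $TS=\mathrm{id}$ on $X_0$, one has $T^{in}S^{jn}(v)=S^{(j-i)n}(v)$ when $j\ge i$ and $T^{in}S^{jn}(v)=T^{(i-j)n}(v)$ when $j<i$; isolating the diagonal term $j=i$, which equals $v$, gives, for each $i=1,\dots,m$,
$$T^{in}(x_n)= T^{in}(u) + v + \sum_{j>i} S^{(j-i)n}(v) + \sum_{j<i} T^{(i-j)n}(v).$$
Every exponent occurring on a power of $T$ or $S$ above, namely $in$, $(i-j)n$ and $(j-i)n$, lies in the block $\{n,2n,\dots,mn\}$.

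The decisive step is therefore to select $n$ so that the entire block $\{n,2n,\dots,mn\}$ is made up of terms $m_k$ of arbitrarily large index. Once this is arranged, conditions (1) and (2) force $T^{in}(u)\to0$, $T^{(i-j)n}(v)\to0$ and $S^{(j-i)n}(v)\to0$, so that $T^{in}(x_n)\to v$ for every $i\le m$, while $x_n-u=\sum_{j}S^{jn}(v)\to0$ yields $x_n\to u$. Hence for an admissible $n$ large enough we obtain $x_n\in U$ and $T^{in}(x_n)\in V$ for all $i\le m$, so $\{n,2n,\dots,mn\}\subseteq N_T(x_n,V)$ is the desired progression, verifying (iv).

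The main obstacle is exactly securing such a step $n$: I need blocks $\{n,2n,\dots,mn\}$ contained in $(m_k)_k$ with $n$ as large as I please, so that the limits in (1)–(2) are effective (taking the sequence increasing, large values correspond to large indices). This is where the hypothesis that $(m_k)_k$ contains \emph{arbitrarily long} progressions with initial term equal to the step is used: from a progression $\{q,2q,\dots,Lq\}\subseteq\{m_k\}$ with $L\ge mj$ one extracts the block $\{(jq),2(jq),\dots,m(jq)\}\subseteq\{m_k\}$, whose step $n=jq$ may be made arbitrarily large by taking $j=\lfloor L/m\rfloor$ and letting $L\to\infty$. This supplies admissible steps of arbitrary size, completing the verification of (iv) and hence, by Proposition \ref{equivalencias alap}, the $\mathcal{AP}$-hypercyclicity of $T$.
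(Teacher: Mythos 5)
Your proposal is correct, and at its core it is the same argument the paper has in mind --- but where the paper gives no construction at all and simply reduces to the disjointness framework of B\`es--Peris (the remark that progressions with initial term equal to the step are exactly the sets $\{q,2q,\dots,mq\}$, followed by the citation to \cite[Section 2]{BesPer07}, i.e.\ to the $d$-hypercyclicity criterion for the tuple $(T,T^2,\dots,T^m)$), you inline that machinery: the vector $x_n=u+\sum_{j=1}^m S^{jn}(v)$ and the cancellation $T^{in}S^{jn}=S^{(j-i)n}$ or $T^{(i-j)n}$ is precisely the construction underlying the $d$-hypercyclicity criterion, specialized to a single target set $V$, and you then close the argument through the paper's own Proposition \ref{equivalencias alap}(iv) rather than through $d$-transitivity. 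Two points in your write-up deserve credit because the paper leaves them implicit: (a) the extraction step showing that the hypothesis yields blocks $\{n,2n,\dots,mn\}\subseteq(m_k)_k$ with $n$ \emph{arbitrarily large} (taking $n=\lfloor L/m\rfloor q$ inside a progression $\{q,\dots,Lq\}$), which is what makes the limits in conditions (1)--(2) usable since those limits are only along $(m_k)_k$; and (b) the observation that, viewing $(m_k)_k$ as an increasing enumeration of a subset of $\mathbb{N}$, large values correspond to large indices. What the paper's route buys is brevity and the explicit link to disjoint hypercyclicity (used again in Theorem \ref{equiv backward AP}(v)); what your route buys is a self-contained proof that makes visible exactly where the ``initial term equals the common difference'' hypothesis enters.
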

Recall that an operator is said to satisfy the strong Kitai's criterion provided that it satisfies the above criterion but for the full sequence of natural numbers. 
\begin{corollary}\label{strong Kitai Criterion}
Every operator that satisfies the strong Kitai's Criterion is $\mathcal{AP}$-hypercyclic.
\end{corollary}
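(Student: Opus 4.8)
The plan is to deduce this directly from Proposition \ref{criterio alap} by specializing the sequence $(m_k)_k$ to be the full sequence of natural numbers. Recall that an operator satisfies the strong Kitai's Criterion when there is a dense set $X_0 \subseteq X$ and a map $S : X_0 \to X_0$ such that for every $x \in X_0$ one has $T^n(x) \to 0$, $S^n(x) \to 0$ and $TS(x) = x$, where now $n$ ranges over all of $\mathbb N$.

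First I would set $m_k = k$, so that $(m_k)_k$ enumerates $\mathbb N$. The only hypothesis of Proposition \ref{criterio alap} that is not literally identical to the strong Kitai conditions is the requirement that the range of $(m_k)_k$ lie in $\AP$ and contain arbitrarily long arithmetic progressions whose initial term coincides with the common difference. For the full sequence this is immediate: for any $q, m \in \mathbb N$ the progression $\{q, 2q, \dots, mq\}$ is a subset of $\mathbb N$, so $\mathbb N$ contains arbitrarily long progressions of the prescribed form (and in particular $\mathbb N \in \AP$).

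With this choice, conditions (1)--(3) of Proposition \ref{criterio alap} read $T^{k}(x) \to 0$, $S^{k}(x) \to 0$ and $TS(x) = x$ for every $x \in X_0$, which are exactly the three defining properties of the strong Kitai's Criterion. Hence the hypotheses of Proposition \ref{criterio alap} are satisfied, and applying it yields that $T$ is $\AP$-hypercyclic.

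There is essentially no obstacle here: the corollary is a pure specialization of the preceding criterion, and the only point that requires a moment's verification is the trivial observation that $\mathbb N$ itself contains arithmetic progressions of the form $\{q, 2q, \dots, mq\}$ of every length, which is precisely what allows the full-sequence convergence hypothesis to feed into the $\AP$-hypercyclicity criterion.
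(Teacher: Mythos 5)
Your proof is correct and follows exactly the route the paper intends: the corollary is stated without proof precisely because it is the specialization of Proposition \ref{criterio alap} to the full sequence $(m_k)_k = (k)_k$, whose only nontrivial ingredient is the observation that $\mathbb{N}$ contains every progression $\{q, 2q, \dots, mq\}$. Nothing is missing.
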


\section {$\AP$-hypercyclic backward shifts}
In \cite{CosPar12} the authors characterized the bilateral weighted backward shifts on $\ell_2$ which are multiply recurrent. They also showed that recurrent bilateral weighted shifts are hypercyclic and hence  every multiply recurrent bilateral weighted shift on $\ell_2$ is in fact $\mathcal{AP}$-hypercyclic.
We will extend this result to unilateral weighted shifts on Fr\'echet spaces having a Schauder basis by applying Proposition \ref{criterio alap}.

\begin{theorem}\label{equiv backward AP}
	Let $X$ be a separable Fr\'echet space with Schauder basis $\{e_n\}$ and suppose that $B(e_{n+1})=e_n$ is a well defined and continuous backward shift.
	The following are equivalent:
	\begin{itemize}
	    \item [i)]$B$ is $\mathcal{AP}$-hypercyclic;
	    \item [ii)] $B$ is multiply recurrent;
	    \item [iii)]$e_{n_k}\to0$ for some sequence $(n_k)_k\in \AP$ with the following property: given $p,m\in\mathbb N$ there exists $q\in\mathbb N$ such that the arithmetic progression of length $m$, common difference $q$ and initial term $p+q$ is contained in $({n_k})_k$;
	    \item [iv)]$B$ satisfies the $\mathcal{AP}$-hypercyclicity criterion.
	    \item [v)] $T, T^2\ldots ,T^m$ are disjoint hypercyclic for every $m$.
	\end{itemize}  
\end{theorem}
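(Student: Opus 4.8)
The plan is to prove the five statements equivalent through a cycle that leans on the two criteria already established, Proposition \ref{equivalencias alap} and Proposition \ref{criterio alap}, together with the observation made before Corollary \ref{existe ALAP} that $d$-transitivity of $\{B,B^2,\dots,B^m\}$ for every $m$ forces $\AP$-hypercyclicity. Throughout I would write $X_0=\operatorname{span}\{e_n\}$, which is dense, and take $S$ to be the forward shift $Se_n=e_{n+1}$, so that $BS=\mathrm{id}$ on $X_0$ and $B^Nx=0$ eventually for every $x\in X_0$; in particular $B^Nx\to0$ on $X_0$ is automatic. The implications $(i)\Rightarrow(ii)$ and $(iv)\Rightarrow(i)$ are then immediate: the first is the content of Proposition \ref{equivalencias alap} ($\AP$-hypercyclic implies multiply recurrent), and the second is exactly Proposition \ref{criterio alap}. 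Likewise $(v)\Rightarrow(i)$ follows since disjoint hypercyclicity of $\{B,\dots,B^m\}$ yields $d$-transitivity of the tuple, and $d$-transitivity for every $m$ gives $\AP$-hypercyclicity. It remains to close the cycle with $(ii)\Rightarrow(iii)$, $(iii)\Rightarrow(v)$ and $(iii)\Rightarrow(iv)$.

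For $(ii)\Rightarrow(iii)$ I would test multiple recurrence against small balls centered at the basis vectors. Fixing $p,m$ and a small radius $\varepsilon$, multiple recurrence produces $k$ and a vector $y$ with $y,B^ky,\dots,B^{mk}y$ all within $\varepsilon$ of $e_p$. Reading off the $p$-th coordinate of $B^{jk}y$, which is the $(p+jk)$-th coordinate of $y$, forces $y$ to have coordinate close to $1$ at each index $p+jk$ while staying close to $e_p$; since the coordinate functionals are continuous this is only possible when $\|e_{p+jk}\|$ is small for $j=1,\dots,m$. Thus each pair $(p,m)$, at scale $\varepsilon$, yields an arithmetic progression $\{p+k,p+2k,\dots,p+mk\}$ of indices whose basis vectors are small. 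Diagonalizing over all $(p,m)$ with radii $\varepsilon\to0$ and keeping only progressions lying far enough out, I would collect these indices into a single sequence $(n_k)_k$ with $e_{n_k}\to0$ that contains, for every $p,m$, a progression of length $m$, common difference $q$ and initial term $p+q$; this is precisely $(iii)$.

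The heart of the argument is $(iii)\Rightarrow(v)$, and the same construction will give $(iii)\Rightarrow(iv)$. The key tool is the continuity of $B$: for each continuous seminorm $\rho$ and each fixed $d$ there is a continuous seminorm $\tilde\rho$ with $\rho(e_{t+N})=\rho(B^{\,d-t}e_{N+d})\le\tilde\rho(e_{N+d})$ for all $t\le d$, so the smallness of the \emph{single} top vector $e_{N+d}$ controls the whole block $\{e_{t+N}:t\le d\}$ beneath it. Given nonempty open sets $U,V_1,\dots,V_m$, I would pick $u,v_1,\dots,v_m\in X_0$ supported in $[0,d]$ and apply $(iii)$ with $p=d$ to obtain $q>d$, as large as needed, with $\{d+q,d+2q,\dots,d+mq\}\subseteq(n_k)$, so that all the $e_{d+jq}$ have arbitrarily small seminorms. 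Setting $x=u+\sum_{j=1}^m S^{jq}(v_j)$, the choice $q>d$ annihilates the cross terms $B^{jq}S^{iq}v_i$ with $i<j$, the diagonal terms return $v_j$, and the remaining terms $S^{(i-j)q}v_i$ together with the correction $x-u$ are governed by the block control above; hence $x\in U$ and $B^{jq}x\in V_j$ for every $j$. This is $d$-transitivity of $\{B,B^2,\dots,B^m\}$ for all $m$, which by the disjoint Birkhoff transitivity theorem upgrades to disjoint hypercyclicity, giving $(v)$. Assembling these blocks into one sequence $(m_k)_k=\bigcup_r\{q_r,2q_r,\dots,s_rq_r\}$ with $s_r\to\infty$, where at stage $r$ one handles all supports up to $r$, then verifies the hypotheses of Proposition \ref{criterio alap} and yields $(iv)$.

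The main obstacle is exactly the step just described: making the forward images $S^{jq}(v_j)$ small. The backward-shift continuity propagates smallness of a basis vector only \emph{downward}, while $(iii)$ furnishes small-norm indices along a single progression rather than along a full block, so a naive estimate does not control $e_{t+jq}$ for $0\le t\le d$. The resolution is to rewrite each such vector as $B^{\,d-t}e_{d+jq}$ and let smallness descend from the one index $e_{d+jq}$ supplied by $(iii)$, while choosing the return times to be the entire progression $\{q,2q,\dots,mq\}$ so that every cross-term index falls under the same control and $q$ can be taken large enough that all the finitely many relevant seminorms are small. In the Fr\'echet setting one must also check that only boundedly many seminorm changes occur for a fixed support length $d$, so nothing blows up; and for $(iv)$ the diagonalization must be arranged so that $S^{m_k}x\to0$ holds for every fixed $x\in X_0$, which is why the stages are made to exhaust all supports.
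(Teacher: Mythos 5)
Your treatment of items (i)--(iv) coincides with the paper's proof almost step for step: the coordinate-functional argument for (ii)$\Rightarrow$(iii) is exactly theirs, and your key device for (iii)$\Rightarrow$(iv) --- letting smallness descend from the single top vector $e_{d+N}$ to the whole block $\{e_{t+N}:t\le d\}$ via continuity of $B$, then assembling the blocks $\{q_r,2q_r,\dots,s_rq_r\}$ into one sequence --- is precisely what the paper isolates as Lemma \ref{sucesion alap} (written there in the form $e_{n_k-j}=B^j(e_{n_k})\to 0$, followed by re-indexing). Your direct proof of (iii)$\Rightarrow$(v), establishing $d$-transitivity of $(B,B^2,\dots,B^m)$ by the same block construction and then invoking the B\`es--Peris Birkhoff-type theorem \cite{BesPer07}, is sound, and in fact covers ground the paper's written proof omits: the published argument only closes the cycle (i)$\Rightarrow$(ii)$\Rightarrow$(iii)$\Rightarrow$(iv)$\Rightarrow$(i) and never addresses (v). (One small point, glossed both by you and by the paper: one must note that the progressions furnished by (iii) can be taken arbitrarily far out along $(n_k)_k$, so that the relevant basis vectors are small; this follows by applying (iii) with length $mM$ and passing to the sub-progression of common difference $Mq$.)

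The genuine gap is your implication (v)$\Rightarrow$(i). You assert that ``disjoint hypercyclicity of $\{B,\dots,B^m\}$ yields $d$-transitivity of the tuple,'' but this is not a valid inference: what B\`es--Peris prove is that $d$-transitivity is equivalent to the set of $d$-hypercyclic vectors being \emph{dense} (indeed residual), and the existence of one $d$-hypercyclic vector does not bootstrap to density. The single-operator Birkhoff argument fails here because the forward images of a $d$-hypercyclic vector $x$ are not $d$-hypercyclic: $\bigl(B^n(B^kx),B^{2n}(B^kx),\dots,B^{mn}(B^kx)\bigr)=\bigl(B^{n+k}x,B^{2n+k}x,\dots,B^{mn+k}x\bigr)$ is not a point of the $d$-orbit of $x$. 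In fact, examples of $d$-hypercyclic tuples whose $d$-hypercyclic vectors are not dense (hence which are not $d$-transitive) are known, so the implication you invoke is false as a general principle. The step is easy to repair without $d$-transitivity: given nonempty open $U,V$ and $m$, take a $d$-hypercyclic vector $x$ for $(B,B^2,\dots,B^{2m})$; by density of $\{(B^nx,\dots,B^{2mn}x):n\in\mathbb N\}$ in $X^{2m}$ there is $n\ge 1$ with $B^{mn}x\in U$ and $B^{(m+l)n}x\in V$ for $1\le l\le m$. Then $z:=B^{mn}x\in U$ satisfies $B^{ln}z=B^{(m+l)n}x\in V$ for $1\le l\le m$, so $N_B(z,V)$ contains the arithmetic progression $\{n,2n,\dots,mn\}$; this is condition (iv) of Proposition \ref{equivalencias alap}, hence $B$ is $\AP$-hypercyclic. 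With this replacement your proof is complete.
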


For the proof we will need the following lemma.

\begin{lemma}\label{sucesion alap}
	Let $(n_k)_k\in \AP$ with the property that given $p,m\in\mathbb N$ there exists $q\in\mathbb N$ such that the arithmetic progression of length $m$, common difference $q$ and initial term $p+q$ is contained in $({n_k})_k$ and such that $x_{n_k-j}\to 0$ for every $j\geq 0$. Then there is a sequence in $\mathcal{AP}$, $(m_k)_k$ which contains arbitrarily long arithmetic progressions whose initial term coincides with the common difference, such that $x_{m_k+j}\to 0$ for every $j$.
\end{lemma}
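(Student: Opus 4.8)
The plan is to build $(m_k)_k$ as the increasing enumeration of a disjoint union of finite blocks $B_1,B_2,\dots$, where each $B_n=\{q_n,2q_n,\dots,nq_n\}$ is an arithmetic progression of length $n$ whose initial term equals its common difference $q_n$. Since the block lengths $n$ tend to infinity, the resulting sequence automatically contains arbitrarily long progressions of the required special form (and in particular lies in $\AP$); the whole difficulty is to choose the $q_n$ so large that, on the one hand, the blocks are nested to the right of one another (making the enumeration increasing) and, on the other hand, $x$ is uniformly small on a growing right-neighbourhood of each block.

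The bridge to the hypothesis is the identity $iq+s=(p+iq)-(p-s)$. If I apply the defining property of $(n_k)_k$ with initial-term offset $p=n$ and length $n$, I obtain $q$ with $\{n+q,n+2q,\dots,n+nq\}\subseteq (n_k)_k$; writing $n+iq=n_{k_i}$, each point $iq+s$ of $B_n$ shifted by $s\in\{0,\dots,n\}$ equals $n_{k_i}-(n-s)$ with $n-s\ge 0$, so it is controlled by the hypothesis $x_{n_k-j}\to 0$ (here $j=n-s$). Thus, once the indices $k_i$ are large enough, $x_{iq+s}<1/n$ simultaneously for all $1\le i\le n$ and all $0\le s\le n$: concretely, for each $n$ I choose a threshold $T_n=n_{K_n}$ with $K_n=\max_{0\le j\le n}K_{n,j}$, where $K_{n,j}$ witnesses $x_{n_k-j}<1/n$ for $k\ge K_{n,j}$, and ask that every element $n+iq\ge T_n$.

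The main obstacle is precisely that the literal statement of the property only yields \emph{some} $q$, whereas I need $q=q_n$ as large as I please (large enough that $n+q_n\ge T_n$ and that $q_n$ exceeds the largest element of $B_{n-1}$). I would dispose of this with a sub-progression trick, which is the heart of the argument: given $p,m$ and any target $Q_0$, apply the property to the pair $(p,mQ_0)$ to get $\tilde q$ with $\{p+\tilde q,\dots,p+mQ_0\,\tilde q\}\subseteq (n_k)_k$, and then pass to the sub-progression of common difference $q:=Q_0\tilde q$, namely $\{p+q,p+2q,\dots,p+mq\}=\{p+iQ_0\tilde q:1\le i\le m\}$, which is contained in the long progression because $iQ_0\le mQ_0$. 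This is again a progression of length $m$ with initial term $p+q$, but now $q\ge Q_0$ is arbitrarily large, so both size requirements on $q_n$ can be met.

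Finally I would assemble the blocks in order and read off the conclusion: the $q_n$ grow fast enough that $(m_k)_k$ is the increasing enumeration of $\bigcup_n B_n$, it contains the progressions $B_n$ of length $n$ with coincident initial term and difference, and for each fixed $j\ge 0$ every block $B_n$ with $n\ge j$ satisfies $x_{m_k+j}<1/n$ on all of its elements, whence $x_{m_k+j}\to 0$ as $k\to\infty$. The case of negative $j$ is identical after enlarging the depth from $n$ to, say, $2n$ at stage $n$, which the sub-progression trick still accommodates.
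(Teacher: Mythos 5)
Your proof is correct and follows essentially the same route as the paper's: both build $(m_k)_k$ as the union of blocks $\{q_n, 2q_n, \dots, nq_n\}$ obtained by shifting progressions with initial term $p+q$ back by $p$, using the identity $iq+s=(p+iq)-(p-s)$ to bring the hypothesis $x_{n_k-j}\to 0$ to bear. Your one genuine addition is the sub-progression trick producing arbitrarily large common differences: the paper simply asserts (``By our assumptions'') that the progression can be chosen so that $\|x_{lq_m+m-j}\|<1/m$ holds, and this implicitly requires exactly the fact you make explicit, namely that the progression can be pushed far enough along $(n_k)_k$ for the convergence hypothesis to apply.
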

	
\begin{proof}
	Let $\|\cdot\|$ denote the $F$-norm of $X$. By our assumptions, for each $m$, there is an arithmetic progression of length $m$, common difference $q_m$ and initial term $m+q_m$ such that $\|x_{lq_{m}+m-j}\|<\f{1}{m}$ for every $0\leq j\leq m$, $1\leq l\leq m$. 
	Thus, $\|x_{lq_m+j}\|<\f{1}{m}$ for every $0\leq j\leq m$, $1\leq l\leq m$. 
	
	 Let $(m_k)_k$ be the sequence formed by $\cup_m\{lq_m:\, 1\leq l\leq m \}$. Then $(m_k)_k$ is in $\mathcal{AP}$, it has arbitrarily long arithmetic progressions whose initial term coincides with the common difference and satisfies that $\|x_{m_k+j}\|\to 0$ for every $j$.
\end{proof}

\begin{proof}[Proof of Theorem \ref{equiv backward AP}]
Let $\|\cdot\|$ denote the $F$-norm of $X$.

i)$\Rightarrow$ ii) is obvious by definition.

ii)$\Rightarrow$ iii).
Suppose  that $B$ is multiply recurrent. It suffices to show that for each $\epsilon>0$, and each $p,m\in\mathbb N$ there is $q$ such that $\|e_{jq+p}\|<\varepsilon$ for every $1\leq j \leq m$.

Since $\{e_n\}$   is a Schauder basis, there exists $\delta>0$ such that $\|x-e_p\|<\delta$ implies $\|x_ne_n\|<\f{\epsilon}{2}$ for any $n\ne p$ and $|x_p|>\frac12$. Since $B$ is multiply recurrent, there exist $q$ and $x$ such that $\|x-e_p\|<\delta$ and $\|B^{jq}(x)-e_p\|<\delta$ for every $1\leq j \leq m$. Thus, $\|x_n e_n\|<\f{\epsilon}{2}$ for every $n\ne p$ and $|B^{jq}(x)_p|=|x_{jq+p}|>\frac{1}{2}$ for every $0\leq j\leq m$. It follows that $\|e_{jq+p}\|<\epsilon$ for every $1\leq j\leq m$.

iii)$\Rightarrow $ iv).
 Let $X_0=span(e_n:n\in \zN)$ and $S$ be the forward shift defined in $X_0$.  We have for free that $B^n(x)\to 0$ for every $x\in X_0$ and that $BS(x)=x$. 
 It remains to find a sequence $(m_k)_k\in\mathcal{AP}$ with arbitrarily long arithmetic sequences of the form $\{q,2q,\dots,mq\}$ such that $S^{m_k}(x)\to 0$ for every $x\in c_{00}$.


Since $e_{n_k}\to 0$ we have that $e_{n_k-j}=B^{j}(e_{n_k})\to 0$ for every $j$. Thus, by Lemma \ref{sucesion alap}, there is an sequence  $(m_k)_k\in\mathcal{AP}$ with the required property such that $e_{m_k+j}=S^{m_k}(e_j)\to 0$ for every $j$. 

iv)$\Rightarrow $i) follows by Proposition \ref{criterio alap}.	
\end{proof}

Applying a quasiconjugation argument we obtain an analogous result for weighted backward shifts.
\begin{corollary}
	Let $X$ be a Fr\'echet space with Schauder basis $\{e_n\}$ and suppose that $B_\omega(e_{n+1})=\omega_n e_n$ is a well defined and continuous weighted backward shift.
	The following are equivalent:
	\begin{itemize}
	    \item [i)] $B_\omega$ is $\mathcal{AP}$-hypercyclic;
	    \item [ii)] $B_\omega$ is multiply recurrent;
	    \item [iii)] $\prod_{l=1}^{n_k} \omega_l^{-1} e_{n_k}\to 0$ for some sequence   $(n_k)_k\in \mathcal{AP}$ with the following property: given $p,m\in\mathbb N$ there exists $q\in\mathbb N$ such that the arithmetic progression of length $m$, common difference $q$ and initial term $p+q$ is contained in $({n_k})_k$; 
	    \item [iv)] $B_\omega$ satisfies the $\mathcal{AP}$-hypercyclicity criterion.  
	\end{itemize}
	\end{corollary}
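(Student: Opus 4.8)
The plan is to deduce the corollary directly from Theorem \ref{equiv backward AP} by rescaling the Schauder basis so that the weighted shift $B_\omega$ becomes an \emph{unweighted} backward shift on the same Fr\'echet space $X$. Concretely, I would set $v_n=\prod_{l=1}^{n-1}\omega_l$ (with empty product $v_1=1$) and define $f_n=v_n^{-1}e_n$. Since each $f_n$ is a nonzero scalar multiple of $e_n$, the sequence $\{f_n\}$ is again a Schauder basis of $X$: the finite spans $\mathrm{span}(f_1,\dots,f_N)=\mathrm{span}(e_1,\dots,e_N)$ coincide, so the partial-sum projections are unchanged, and the expansion $x=\sum_n (v_n a_n)f_n$ of $x=\sum_n a_n e_n$ converges with continuous coefficient functionals. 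Hence $X$ equipped with $\{f_n\}$ still satisfies the hypotheses of Theorem \ref{equiv backward AP}.

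The key computation is that, with respect to this basis, $B_\omega$ is the plain backward shift. Using $v_{n+1}=\omega_n v_n$,
$$B_\omega f_{n+1}=v_{n+1}^{-1}B_\omega e_{n+1}=v_{n+1}^{-1}\omega_n e_n=(\omega_n v_n)^{-1}\omega_n v_n\, f_n=f_n,$$
so $B_\omega$ is exactly the well-defined, continuous backward shift $f_{n+1}\mapsto f_n$ associated with $\{f_n\}$. This rescaling realizes the conjugacy of $B_\omega$ with the unweighted shift through the formal diagonal operator $e_n\mapsto v_n e_n$, which need not be continuous on $X$; passing to the rescaled basis sidesteps the need for that operator to be a topological isomorphism, and this is the quasiconjugation device at play.

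With this identification I would simply invoke Theorem \ref{equiv backward AP} for the operator $B_\omega$ and the basis $\{f_n\}$. Conditions (i), (ii) and (iv) of the corollary are word-for-word the corresponding conditions of the theorem, so no translation is needed. Condition (iii) of the theorem reads $f_{n_k}\to 0$ for a sequence $(n_k)_k\in\AP$ with the stated arithmetic-progression property, and since $f_{n_k}=v_{n_k}^{-1}e_{n_k}=\big(\prod_{l=1}^{n_k-1}\omega_l^{-1}\big)e_{n_k}$, this is precisely the convergence $\prod_{l}\omega_l^{-1}e_{n_k}\to 0$ asserted in the corollary. The one genuine bookkeeping point — the step I would check most carefully — is matching the range of the product with the indexing convention of $B_\omega(e_{n+1})=\omega_n e_n$, since a shift of the convention moves the product by one factor $\omega_{n_k}$; under the paper's convention this factor is absorbed and the two versions of (iii) coincide exactly. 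Everything else is an immediate transfer through the change of basis, so I expect no substantial obstacle beyond this indexing verification.
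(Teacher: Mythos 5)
Your rescaling argument is exactly the quasiconjugation the paper has in mind (the paper offers no proof beyond the phrase ``applying a quasiconjugation argument''), and its core is sound: $\{f_n\}$ with $f_n=\bigl(\prod_{l=1}^{n-1}\omega_l\bigr)^{-1}e_n$ is again a Schauder basis of $X$, $B_\omega$ acts on it as the unweighted backward shift $f_{n+1}\mapsto f_n$, and Theorem \ref{equiv backward AP} then applies verbatim, with no need for the diagonal map $e_n\mapsto v_ne_n$ to be continuous.

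However, the step you said you would ``check most carefully'' is precisely where your write-up fails, and it does not resolve the way you claim. Under the paper's stated convention $B_\omega(e_{n+1})=\omega_ne_n$, your transfer of condition (iii) reads $f_{n_k}=\bigl(\prod_{l=1}^{n_k-1}\omega_l^{-1}\bigr)e_{n_k}\to 0$, whereas the corollary asserts $\bigl(\prod_{l=1}^{n_k}\omega_l^{-1}\bigr)e_{n_k}\to 0$. These differ by the factor $\omega_{n_k}^{-1}$, which varies with $k$ and is not ``absorbed'' by anything: on a Fr\'echet space the weights of a continuous shift need be neither bounded above nor bounded away from zero (e.g.\ the differentiation operator on $H(\mathbb{C})$ has $\omega_n=n$), so for a fixed sequence neither form of (iii) implies the other, and the claim that the two versions ``coincide exactly'' is false. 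What is actually happening is an off-by-one inconsistency internal to the corollary's statement: the displayed condition (iii) matches the other standard indexing $B_\omega(e_n)=\omega_ne_{n-1}$, under which the rescaled basis vector is $\bigl(\prod_{l=1}^{n}\omega_l^{-1}\bigr)e_n$ up to the \emph{fixed} constant $\omega_1^{-1}$ (a constant factor genuinely is harmless for an $F$-norm). So your argument proves the corollary with (iii) corrected to $\bigl(\prod_{l=1}^{n_k-1}\omega_l^{-1}\bigr)e_{n_k}\to 0$; to obtain the literal statement you must either record this indexing correction, or add a hypothesis/argument (e.g.\ weights bounded above and below) identifying the two conditions. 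As it stands, the last sentence of your proof asserts an identity that is simply not true.
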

Recall that every hypercyclic backward shift is weakly mixing. Thus, every $\AP$-hypercyclic backward shift is weakly mixing but the converse is not true. On the other hand, we will see in Theorem  \ref{alap no weakly} that not every $\AP$-hypercyclic operator is weakly mixing.

Recall also that a backward shift on a Fr\'echet space with basis is mixing if and only if $e_n\to 0$, thus every mixing backward shift is $\AP$-hypercyclic but there are $\AP$-hypercyclic backward shifts that are not mixing. In \cite{kwietniak2017multi} there is an example of a mixing subshift in $\{0,1\}^{\mathbb N}$ that is not $\AP$-transitive and in \cite{puig2017mixing} an example of a mixing linear operator such that $\{T,T^2\}$ is not $d$-transitive (but it is $\AP$-hypercyclic because it is chaotic). Note also that if $T$ is mixing then $T\oplus T^2\oplus \dots\oplus T^m$ is hypercyclic for every $m$. But we don't know the answer to the following. 
\begin{question}\label{pregunta mixing implica AP}
Is every mixing linear operator on a separable Fr\'echet space necessarily $\AP$-hypercyclic? Or equivalently, is any mixing operator multiply recurrent?
\end{question}

\section{Weakly mixing and multiply recurrent operators}

In this section we study the relationship between $\AP$-hypercyclicity and weak mixing for linear operators. It was shown in \cite{kwietniak2017multi} that if $T$ is $\AP$-hypercyclic then $T\oplus T$ is $\AP$-recurrent (in the context of compact dynamical systems, but the proof also works in the non-compact case).  On the other hand, we know that there are weakly mixing operators that are not $\AP$-hypercyclic (see \cite[Proposition 5.8]{CosPar12} or the characterization of $\AP$-hypercyclic backward shifts, Theorem \ref{equiv backward AP}). 
We  show that the converse implication does not hold either. 
We then characterize operators that are both weakly mixing and multiply recurrent.

\subsection*{An $\mathcal{AP}$-hypercyclic operator which is not weakly mixing}
 In 2009 De la Rosa and Read solved one of the most important problems that were open in linear dynamics: they constructed a hypercyclic operator that is not weakly mixing or equivalently that does not satisfy the hypercyclicity criterion \cite{dlRRea09}. The hypercyclicity criterion has many formulations and it is usually implied by some regularity condition. For instance, every chaotic operator or every reiterative hypercyclic operator is weakly mixing. Thus, it would be reasonable to expect that $\AP$-hypercyclicity implies weakly mixing.

On the other hand Bayart and Matheron constructed  examples of non-weakly mixing hypercyclic operators on classical spaces, such as $\ell_p,c_0,\H$ \cite{BayMat07}. They also studied  in \cite{BayMat09Non} non-weakly mixing operators having orbits with a high level of frequency, and proved that if $(m_k)_k$ satisfies that $\lim_k \f{m_k}{k}=+\infty$ 
then there exists a hypercyclic non-weakly mixing operator $T$ on $\ell_1$ satisfying that for each nonempty open set $U$, the recurrence set $N_T(x,U)$ is $O((m_k)_k)$. Note that this result is very tight since if $(\frac{m_k}{k})_k$ were bounded then such a $T$ would be frequently hypercyclic and hence weakly mixing.

We will show next that this result, together with some quantitative upper bounds proved by Gowers for Szemer\'edi's theorem, implies that there are  $\mathcal{AP}$-hypercyclic operators which are not weakly mixing. See also
\cite{cardeccia2020thesis}, where such an operator is explicitly constructed.
\begin{theorem}\label{alap no weakly}
There exists a multiply recurrent and hypercyclic operator on $\ell_1$ that is not weakly mixing.
\end{theorem}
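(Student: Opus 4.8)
The plan is to combine the Bayart--Matheron construction of non-weakly-mixing operators with orbit frequency controlled by a sparse sequence $(m_k)_k$ together with Gowers' quantitative upper bounds for Szemer\'edi's theorem. The key observation is that although $\AP$-hypercyclicity requires the return sets $N_T(x,U)$ to contain arbitrarily long arithmetic progressions, this is a far weaker requirement than positive upper density, so one can hope to realize it inside a set that is sparse enough to avoid weak mixing. Specifically, I would look for a sequence $(m_k)_k$ that simultaneously is sparse enough that $\lim_k \frac{m_k}{k}=+\infty$, so that the Bayart--Matheron theorem in \cite{BayMat09Non} applies and yields a hypercyclic non-weakly-mixing operator $T$ on $\ell_1$ whose return sets $N_T(x,U)$ are all of order $O((m_k)_k)$, and yet dense enough that every set of the form $O((m_k)_k)$ (i.e. every set whose enumeration is eventually dominated by the $m_k$) necessarily contains arbitrarily long arithmetic progressions.

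The heart of the argument is this second, combinatorial requirement, and this is where Gowers' bounds enter. Szemer\'edi's theorem says that any subset of $\{1,\dots,N\}$ of density at least $\delta$ contains a $k$-term arithmetic progression once $N$ is large; Gowers' theorem \cite{gowers2001new} makes the dependence explicit, giving an effective bound $N(k,\delta)$ of tower/double-exponential type. The strategy is to choose the growth of $(m_k)_k$ calibrated against Gowers' function so that any set $A$ with enumeration function growing no faster than $(m_k)_k$ has, on infinitely many initial windows $\{1,\dots,N\}$, enough relative density to force, via Gowers' quantitative Szemer\'edi, an arithmetic progression of length tending to infinity. Concretely, I would first fix how long a progression I want to guarantee (length $m$), translate that through Gowers into a density threshold and a window length, and then verify that a set which is $O((m_k)_k)$ meets that density on suitable windows; iterating over $m$ gives arbitrarily long progressions. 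The delicate point is that $O((m_k)_k)$ controls the \emph{counting}/enumeration of $A$, so I must convert an enumeration bound into a lower bound on $\#(A\cap\{1,\dots,N\})$ for appropriate $N$, and check this lower bound clears Gowers' threshold.

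Once such a sequence $(m_k)_k$ is produced, the proof assembles as follows. First, invoke \cite{BayMat09Non} to obtain the operator $T$ on $\ell_1$ that is hypercyclic and not weakly mixing, with every $N_T(x,U)\in O((m_k)_k)$ for each nonempty open $U$. Second, apply the tailored Gowers estimate to conclude that each such $N_T(x,U)$ contains arbitrarily long arithmetic progressions, i.e. belongs to $\AP$. Since $T$ is hypercyclic and its hypercyclic vectors have all return sets in $\AP$, the operator is $\AP$-hypercyclic, equivalently (by Proposition \ref{equivalencias alap}) multiply recurrent and hypercyclic. The non-weak-mixing is inherited directly from the Bayart--Matheron construction.

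The main obstacle I anticipate is the quantitative calibration in the middle step: matching the admissible growth rate in the Bayart--Matheron hypothesis, namely $\frac{m_k}{k}\to\infty$, against the extremely rapid growth of Gowers' bound $N(k,\delta)$. I expect the tension to be that forcing long progressions through Szemer\'edi wants $A$ to be relatively dense on large windows, which pushes $(m_k)_k$ to grow slowly, while avoiding weak mixing via \cite{BayMat09Non} merely needs $\frac{m_k}{k}\to\infty$, a very mild sparseness condition; the real work is checking these two constraints are jointly satisfiable and exhibiting an explicit $(m_k)_k$ in the gap. I would therefore spend most of the effort choosing $(m_k)_k$ so that on a suitable subsequence of windows the relative density of any $O((m_k)_k)$ set stays above Gowers' threshold for progressions of length $\to\infty$, and carefully verify that this is compatible with $\frac{m_k}{k}\to\infty$.
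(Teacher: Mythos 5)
Your proposal follows essentially the same route as the paper's proof: invoke Bayart--Matheron \cite{BayMat09Non} to get a hypercyclic, non-weakly-mixing operator on $\ell_1$ whose return sets are $O((m_k)_k)$ for a sequence with $\frac{m_k}{k}\to\infty$, then use Gowers' quantitative Szemer\'edi bound to show every such set contains arbitrarily long arithmetic progressions. The calibration you flag as the main obstacle is exactly what the paper resolves by an explicit choice, taking $m_l=[f^{-1}(l)]$ with $f(t)=t\sqrt{2}^{-\sqrt{\log\log\log t}}$ and progression length $k(n)\approx\log\log\sqrt{\log\log\log n}$, so that Gowers' estimate $r_k(n)<\frac{n}{(\log\log n)^{2^{-2^{k+9}}}}$ falls below the enumeration count.
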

\begin{proof}
Let $f(t):={t}{\sqrt {2}^{-\sqrt{\log\log\log t}}}$ and
$m_l:=[f^{-1}(l)]$ for $l\in\mathbb N$, where $\log$ is the base $2$ logarithm and $[t]$ denotes the integer part of $t\in \mathbb R$.
 Then since $l\sim {m_l}{\sqrt 2^{-\sqrt{\log\log\log m_l}}}$,
 $$
\frac{m_l}{l}\sim \frac{m_l}{\frac{m_l}{\sqrt 2^{\sqrt{\log\log\log m_l}}}}\to \infty,\quad \textrm{as }l\to\infty.
$$
 Thus by \cite{BayMat09Non}, there exist $T$ on $\ell_1$ and $x\in\ell_1$ such that $T$ is not weakly mixing and $N_T(x,U)$ is $O((m_l)_l)$ for every nonempty open set $U$.
 
 Let us show that such an operator $T$ must be $\AP$-hypercyclic.
 Let $r_k(n)$ be the maximum of all $r$ such that there exists $A\subset \{1\dots,n\}$ such that
$|A|=r$  and $A$ does not have an arithmetic progression of length $k.$
 
 It is known by \cite{gowers2001new} that 
 $r_k(n)<\frac{n}{(\log\log n)^{2^{-2^{k+9}}}}-1$. 
 Take $k(n)=[\log\log\sqrt{\log\log\log n}-9]$. 
 Then 
 $$
 r_{k(n)}(n)+1<\frac{n}{(\log\log n)^{2^{-\log\sqrt{\log\log\log n}}}}=\frac{n}{(\log\log n)^{\frac1{\sqrt{\log\log\log n}}}} = \frac{n}{2^{\sqrt{\log\log\log n}}}.
 $$
  Then, since $l>r_{k(m_l)}(m_l)$, there must be an arithmetic progression of length $k(m_l)$ contained in $\{m_1,m_2,\dots,m_l\}$. Moreover if $(n_l)_l=O((m_l)_l)$ then $n_l\le Cm_l$ for some $C>0$ then since $l\sim \frac{m_l}{2^{\sqrt{\log\log\log\sqrt m_l}}}$
$$
\frac{r_{k(n_l)}(n_l)}{l} \le \frac{r_{k(Cm_l)}(Cm_l)}{l} \le \frac{\frac{Cm_l}{2^{\sqrt{\log\log\log Cm_l}}}}{\frac{\tilde Cm_l}{\sqrt 2^{\sqrt{\log\log\log m_l}}}}<1,
$$
for sufficiently large $l$. Therefore $\{n_1,n_2,\dots,n_l\}$ contains an arithmetic progression of length $k(n_l)$ for all sufficiently large $l$.
Consequently, $T$ is $\AP$-hypercyclic.
\end{proof}

\subsection*{Weak mixing and multiple recurrence}
Now we proceed to characterize operators that are both weakly mixing and multiply recurrent. We will show that classical results on weakly mixing operators have an ``$\AP$-analogue''.

\begin{theorem}
The following are equivalent: 
\begin{enumerate}[label=(\roman*)]
    \item $T$ is weakly mixing and multiply recurrent.
       \item $T\oplus T$ is $\AP$-hypercyclic
    \item \emph{(Furstenberg type theorem)}
 $T\oplus T\ldots \oplus T$ is $\AP$-hypercyclic for every $n\in\mathbb N$.
    \item \emph{($T$ is hereditarily $\AP$-hypercyclic)} There is $(n_k)_k\in \AP$ such that for every $\AP$-subsequence $(m_k)_k$ of $(n_k)_k$ there is some $x$ satysfying that $N_T(x,U)\cap (m_k)_k\in \AP$ for  every nonempty open set $U$.
     \item $T$ satisfies the following criterion: there are an $\AP$-sequence $(n_{k,j})_{k\in\mathbb N, 0\le j\le k}=(a_k+jc_k)_{k\in\mathbb N, 0\le j\le k}$, dense sets $X_0,Y_0$ and applications $S_{k,j}:Y_0\to X$ such that for every $x\in X_0$ and every $y\in Y_0$,
\begin{enumerate}
    \item $T^{n_{k,j}}(x)\to 0$,
    \item $(S_{n_{k,0}}+\dots+S_{n_{k,k}})(y)\to 0$,
    \item $T^{n_{k,j}}(S_{n_{k,0}}+\dots+S_{n_{k,k}})y\to y,$ as $k\to\infty$ (independently of the choice of $j\le k$).
\end{enumerate}
     \item For every nonempty open sets $U,V_1,V_2$ and every length $m$ there are $x_1,x_2\in U$ and $a,k\in\mathbb N$ such that $T^{a+jk}(x_i)\in V_i$ for every $j\leq m$.
  \end{enumerate}
\end{theorem}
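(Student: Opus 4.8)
The plan is to deduce the six conditions from the propositions already established in this section together with an $\AP$-adaptation of the classical Bès--Peris equivalences. Concretely, I would run the cycle $(ii)\Rightarrow(vi)\Rightarrow(v)\Rightarrow(iv)\Rightarrow(iii)\Rightarrow(ii)$ and attach $(i)$ by proving $(i)\Leftrightarrow(ii)$ on the side.

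I begin with the bookkeeping implications, which are essentially free. For $(i)\Rightarrow(ii)$: weak mixing makes $T$ hypercyclic, so by Proposition \ref{equivalencias alap} the hypercyclic and multiply recurrent operator $T$ is $\AP$-hypercyclic; the result of \cite{kwietniak2017multi} quoted above then gives that $T\oplus T$ is $\AP$-recurrent, i.e.\ multiply recurrent by Proposition \ref{multiple rec implica AP-rec}, while weak mixing gives that $T\oplus T$ is hypercyclic, and a second application of Proposition \ref{equivalencias alap} yields that $T\oplus T$ is $\AP$-hypercyclic. Conversely, $(ii)\Rightarrow(i)$ since an $\AP$-hypercyclic $T\oplus T$ is in particular hypercyclic (so $T$ is weakly mixing) and multiply recurrent, and testing multiple recurrence of $T\oplus T$ on product sets of the form $U\times X$ forces multiple recurrence of $T$. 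The implication $(iii)\Rightarrow(ii)$ is just the case $n=2$. For $(ii)\Rightarrow(vi)$ I would apply Proposition \ref{equivalencias alap}(iv) to the operator $T\oplus T$ with the product open sets $U\times U$ and $V_1\times V_2$: it produces a pair $(x_1,x_2)\in U\times U$ and an arithmetic progression $a,a+k,\dots,a+mk$ along which $(T\oplus T)^{a+jk}(x_1,x_2)\in V_1\times V_2$, which is exactly $(vi)$ (with the same pair serving both coordinates).

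The constructive core is $(vi)\Rightarrow(v)$, which I expect to be the main obstacle. Here one must manufacture the $\AP$-hypercyclicity criterion of $(v)$ out of the two-target transitivity condition $(vi)$, following the Bès--Peris scheme adapted to arithmetic progressions. I would fix dense sequences exhausting $X_0$ and $Y_0$ and, by induction on $k$, use $(vi)$ to select a common difference $c_k$ and an initial term $a_k$, hence a block $B_k=\{a_k+jc_k:0\le j\le k\}$, together with maps $S_{k,j}$, so that the chosen source vectors are sent near $0$ by every $T^{n_{k,j}}$, the combined map $\Sigma_k:=S_{k,0}+\dots+S_{k,k}$ sends the target vectors near $0$, and $T^{n_{k,j}}\Sigma_k$ reproduces each target vector uniformly in $j\le k$. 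The delicate point is that $(vi)$ controls only two target sets along a single common progression, whereas the criterion asks, at stage $k$, to hit a prescribed target simultaneously at all $k+1$ times of one progression; bootstrapping the two-target condition into this simultaneous, growing-length control --- while keeping the blocks $B_k$ spread out enough that every sufficiently long arithmetic progression contained in $\bigcup_k B_k$ lies in a single block --- is the heart of the argument.

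With the criterion in hand the remaining steps follow the classical template and refine Proposition \ref{criterio alap}. For $(v)\Rightarrow(iv)$ I would let $(n_k)_k$ enumerate $\bigcup_k B_k$; since the convergences in $(v)$ hold along the full index, and hence along any subindex, and since (by the spreading of the blocks) any $\AP$-subsequence of $(n_k)_k$ contains arbitrarily long sub-blocks, the criterion restricts to each such $\AP$-subsequence and produces, through the usual Baire argument, a vector that is $\AP$-hypercyclic along it; this is the hereditary property $(iv)$. Finally $(iv)\Rightarrow(iii)$ is the $\AP$-version of ``hereditary hypercyclicity passes to finite powers'': the hereditary condition is inherited by $T\oplus\cdots\oplus T$ coordinatewise, each coordinate being controlled along a common sub-block, so the power is hereditarily $\AP$-hypercyclic and, taking the full sequence as the $\AP$-subsequence, $\AP$-hypercyclic, which is $(iii)$. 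The secondary difficulty here is purely combinatorial, namely arranging that one arithmetic progression serves all $n$ coordinates at once, which is again supplied by the block structure built in $(vi)\Rightarrow(v)$.
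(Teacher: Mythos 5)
Your peripheral implications are fine: $(i)\Leftrightarrow(ii)$ (routing through the quoted result of \cite{kwietniak2017multi} plus Propositions \ref{equivalencias alap} and \ref{multiple rec implica AP-rec} is legitimate, and is a valid alternative to the paper's direct construction for $(i)\Rightarrow(ii)$), as are $(ii)\Rightarrow(vi)$ and $(iii)\Rightarrow(ii)$. The genuine gap is at what you yourself call the core: the implication $(vi)\Rightarrow(v)$ is never proved. You describe what a proof would have to accomplish (``bootstrapping the two-target condition into this simultaneous, growing-length control \dots is the heart of the argument'') and stop there. But this is exactly the hard content of the theorem, and the only visible way to obtain simultaneous control of $k+1$ targets along one progression starting from the two-target condition $(vi)$ is first to upgrade $(vi)$ to $(ii)$ (the paper does this with a short double application of $(vi)$: pick $n_1\in N(U_1,U_2)\cap N(U_1,V_2)$, then apply $(vi)$ again to $U_1\cap T^{-n_1}(U_2)$ with targets $V_1$ and $T^{-n_1}(V_2)$), then run the induction $(ii)\Rightarrow(iii)$, and only then extract the hereditary statement $(iv)$ and, from it, the criterion $(v)$. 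A completed version of your step $(vi)\Rightarrow(v)$ would therefore have to contain the whole chain $(vi)\Rightarrow(ii)\Rightarrow(iii)\Rightarrow(iv)\Rightarrow(v)$ inside it; as written it contains none of it, so the proposal omits precisely the part of the proof that carries the weight.

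There is also a structural defect in the cycle even granting that construction. Your steps $(v)\Rightarrow(iv)$ and $(iv)\Rightarrow(iii)$ invoke ``the spreading of the blocks'' and ``the block structure built in $(vi)\Rightarrow(v)$''. In a cyclic proof of an equivalence, each implication must be derived from the statement alone: $(v)$ supplies an arbitrary criterion sequence $(a_k+jc_k)$ with no separation hypothesis, and $(iv)$ supplies only one universal vector per $\AP$-subsequence, with no block structure at all. As sketched, your argument proves $(vi)\Rightarrow(iii)$ through strengthened intermediate statements, but not $(v)\Rightarrow(iv)$ or $(iv)\Rightarrow(iii)$, so conditions $(iv)$ and $(v)$ as stated would not be tied into the equivalence. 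Both defects are repairable but not repaired: for $(v)$ one can pass to a sub-collection of blocks to force separation (conditions (a)--(c) persist along subsequences of $k$, and $a_k\to\infty$ is automatic); and $(iv)\Rightarrow(ii)$ can be done \`a la B\`es--Peris using only the statement, namely the universal vectors along a fixed $\AP$-subsequence form a dense set (if $x$ is universal then so is $T^nx$, since $N_T(T^nx,U)=N_T(x,T^{-n}U)$ and $T^n$ has dense range), and one then applies $(iv)$ to the $\AP$-subsequence $N_T(x_1,V_1)\cap (n_k)_k$ itself to produce a second vector sharing arbitrarily long progressions with the first. Finally, note that ``the criterion restricts to each $\AP$-subsequence'' in your $(v)\Rightarrow(iv)$ is not literally possible, since the sums $S_{n_{k,0}}+\dots+S_{n_{k,k}}$ involve entire blocks; the correct move, which is what the paper's Baire argument does, is to use the full-block control to hit $V$ at every time of a late block and intersect with the subsequence afterwards.
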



\begin{proof}
$(i)\Rightarrow(ii)$. Let $U_1,U_2,V_1, V_2$ be nonempty open sets and $m>0$.
Since $T$ is weakly mixing, there are $U_1'\subset U_1$, $V_1'\subset V_1$ and $N$ such that $T^N(U_1')\subset U_2$ and $T^N(V_1')\subset V_2$. 

On the other hand, since $T$ is $\AP$-hypercyclic, there exist $x_1\in U_1'$, and $a,k\in \mathbb N$ such that $T^{a+kj}x_1\in V_1'\subset V_1$ for $j\le m$. Let now $x_2:=T^Nx_1\in T^NU_1'\subset U_2$. Then $T^{a+kj}x_2\in T^NV_1'\subset V_2.$
We have proved that $(x_1,x_2)\in U_1\times U_2$ and for any $j\le m,$
$$
(T\oplus T)^{a+kj}(x_1,x_2)\in V_1\times V_2.
$$

$(ii)\Rightarrow(iii)$
We prove it by induction. Let $(U_j)_j, (V_j)_j$ be open sets, $1\leq j\leq n+1$. Let $m>0$. Hence, there are $N$, $U_{n}'\sub U_{n}$ and $V_{n}'\sub V_{n}'$ such that $T^{N}(U_n')\sub U_{n+1}$ and $T^{N}(V_n')\sub V_{n+1}$. 

By assumption there are $k,a\in\mathbb N$, $x_i\in U_i$, $1\leq i \leq n-1$ and $x_n\in U_n'\sub U_n$ such that $T^{a+jk}(x_i)\in V_i$, $1\leq i\leq n-1$ and $T^{a+jk}(x_n)\in V_{n'}\sub V_n$. Finally we define $x_{n+1}=T^N(x_n)\in U_{n+1}$. Hence, we have that for every $j$, $T^{a+jk}(x_{n+1})=T^{N+a+jk}(x_n)\in T^N(V_n')\sub V_{n+1}.$

$(iii)\Rightarrow(iv)$. Let $(U_j\times V_j)_j$ be a basis of open sets for $X\times X$.
By $(iii)$, for every $N$, there are $a_N,k_N\in\mathbb N$ such that for $1\le l,j\le N,$ there is $x_{N,l}\in U_l$ such that
$$
T^{a_N+jk_N}(x_{N,l})\in V_l,
$$
i.e. $N_{T\oplus\dots\oplus T}((x_{N,1},\dots,x_{N,N}),V_1\times\dots\times V_N)$ contains an arithmetic progression of length $N$.

Let now $(n_k)_k$ be the sequence formed by $\{a_N+jk_N: j\leq N \text { and } N\in\zN\}$. 
  Moreover, by Proposition \ref{Ansari AP}, we may assume that $a_{N+1}>2(a_{N}+Nk_N)$.
Note that every arithmetic progression  of length $m$ contained in $(n_k)_k$ must be contained in $\{a_N+jk_N:j\leq N\}$ for some $N\ge m$.

Let $(m_k)_k\subset(n_k)_k$, $(m_k)_k\in\AP$. 
We will show that there exists an $\AP$-universal vector $x$ for $(T^{m_k})_k$, that is $N_T(x,U)\cap (m_k)_k\in \AP$ for every nonempty open set $U$.

Notice that, in the same way as in Proposition \ref{equivalencias alap}, it is enough to prove that:
\begin{align}
\nonumber\textrm{for each open sets } U,V &\textrm{ and }m>0,  \textrm{ there is }x\in U \textrm{ such that: }\\
\label{equivalencia AP-universal}&N_T(x,V)\cap (m_k)_k \textrm{ has an arithmetic progression of length }m.
\end{align}
Indeed, if \eqref{equivalencia AP-universal} holds and $(V_n)_n$ is a basis of open sets, then we consider
$$\mathcal O_{l}:= \{x: N_{T}(x,V_l)\cap (m_k)_k \text{ admits an arithmetic progression of length } l\}.$$
It turns out that each $\mathcal O_l$ is a dense open set and hence $\cap_l \mathcal O_l$ is dense. Thus, each vector in $\cap_l \mathcal O_l$ satisfies that $N_T(x,U)\cap (m_k)_k\in \AP$ for every open set $U$.

We now show \eqref{equivalencia AP-universal}. Let  $M>0.$

Take $r$ so that $U_r\times V_r\subset U\times V$, and let $\{b+jc:j=1,\dots,m\}\subset(m_k)_k\cap \{a_N+jk_N:j\leq N\}$ such that $N>\max\{r,M\}.$
Thus, for every $1\le j, l\le N$, 
$$
T^{a_N+jk_N}(x_{N,l})\in V_l.
$$
In particular, for every $1\le j\le M$, 
$$
T^{b+jc}(x_{N,r})\in V.
$$

$(iv)\Rightarrow (v)$ Let $x\in X$ such that for every open set $U$, $N_T(x,U)\cap (n_k)_k\in \AP$. Then, for each $k,$ we may find $(b_k+jd_k)_{j\le k}\subset (n_k)_k$ such that 
$$
T^{b_k+jd_k}(x)\in\frac1{k}B_X.
$$
By $(iv)$ there exists $z\in X$ such that for each open set $U$,
$$
N(z,U)\cap \{b_k+jd_k: k\in\mathbb N, j\le k \}\in \AP.
$$
In particular, there exists a sequence $(n_{k,j})_{j\le k}=(a_k+jc_k)_{j\le k}\subset (b_k+jd_k)_{j\le k}$
such that for $1\le j\le k,$
$$
T^{a_k+jc_k}(z)\in B_X(kx,1).
$$
Define $x_{k,j}:=\frac{z}{k}$ for $j\le k$. Then $x_{k,j}\to 0$ and 
$$
T^{n_{k,j}}(x_{k,j})=T^{a_k+jc_k}(z/k)\in  B_X(x,\frac1{k}),
$$
which implies that $T^{n_{k,j}}(x_{k,j})\to x.$

Let now $X_0=Y_0=orb_T(x)$, which are dense in $X$. Thus, if $T^n(x)\in X_0$,
$T^{n_{k,j}}(T^nx)=T^nT^{a_k+jc_k}(x)\to 0$ as $k\to \infty$. 

We define now $S_{n_{k,j}}$ on $Y_0=orb_T(x)$ as
$$
S_{n_{k,j}}(T^nx):=\frac1{k+1}T^nx_{k,j}=\frac1{k+1}T^n\frac{z}{k}.
$$
Then $(S_{n_{k,0}}+\dots+S_{n_{k,k}})(T^nx)=T^n\frac{z}{k}\to 0$ as $k\to\infty$.

Finally, if $j\le k,$
$$
T^{n_{k,j}}(S_{n_{k,0}}+\dots+S_{n_{k,k}})(T^nx)= \frac1{k+1}T^{n_{k,j}}(T^nx_{k,0}+\dots+T^nx_{k,k}) =T^nT^{n_{k,j}}x_{k,j}\to T^nx,
$$
as $k\to\infty.$

$(v)\Rightarrow(i)$ $T$ satisfies the hypercyclicity criterion, thus, by the B\`es-Peris theorem \cite{BesPer99}, $T$ is weakly mixing.

We prove that it is multiply recurrent. Take $U$ a nonempty open set and $m\in \mathbb N.$ 
We know that for each $i_k,j_k\le k$, $y\in Y_0\cap U$,
$$
T^{a_k+j_kc_k}(S_{a_k}+\dots+S_{a_k+kc_k})y\to y.
$$
In particular, for $k$ big enough, $z:=T^{a_k}(S_{a_k}+\dots+S_{a_k+kc_k})y$ belongs to $U$, and 
$$
\{T^{a_k}z,T^{a_k+c_k}z,\dots,T^{a_k+mc_k}z\}\subset U,
$$
that is, $z\in U\cap (T^{-c_k}U)\cap\dots\cap (T^{-mc_k}U)$. 
This implies that $T$ is multiply recurrent.

To finish the proof we show that $(ii)$ and $(vi)$ are equivalent. 

$(ii)\Rightarrow(vi)$ is immediate.

$(ii)\Leftarrow(vi)$.
Let $U_1,U_2,V_1,V_2$ open sets and $m>0$. By hypothesis there is $n_1\in N(U_1,U_2)\cap N(U_1,V_2)$. Applying the hypothesis again we obtain that there are $a$, $k$ and $x_1\in U_1\cap T^{-{n_1}}(U_2), x_2\in U_1\cap T^{-n_1}(U_2)$ such that $T^{a+jk}(x_1)\in V_1$ and $T^{a+jk}(x_2)\in T^{-n} (V_2)$ for every $j\leq m$. We notice that $T^{n_2}(x_2)\in U$ and that $T^{a+jk}(T^n(x_2))\in V_2$ for every $j\leq m$. 
\end{proof}

\section{Infinitely many arithmetic progressions with the same step }
In this short section we study multiple recurrence with the additional property that there are infinitely many arithmetic progressions with the same step contained in the sets of return times. We see that these notions coincide for linear operators but differ for families of operators. We study this concept in connection to a Theorem due to Costakis and Parissis \cite{CosPar12}.

Given a Furstenberg family $\mathcal F$, the following definition was given in \cite{Pui18} (see also \cite{CosPar12} and \cite[Proposition 4.6]{badea2007unimodular}).
\begin{definition}
Given a family $\mathcal F$,
a sequence of operators $(T_n)_n$ is said to satisfy  property $\mathcal P_{\mathcal F}$ if, for each non-empty open set $U$ in $X$, there exists $x\in X$ such that $\{n\in\mathbb N: T_nx\in U\}\in\mathcal F$.
An operator $T$ satisfies $\mathcal P_{\mathcal F}$ if $(T^n)_n$ has the property $\mathcal P_{\mathcal F}$.
\end{definition}

The main result in \cite{CosPar12} proves that if a sequence of scalars $(\lambda_n)_n$ is such that  $\f{\lambda_{n+\tau}}{\lambda_n}\to 1 $ for some $\tau$ and $(\lambda_nT^n)_n$ has the property $\mathcal P_{\overline{\mathcal BD}}$ then $T$  itself is multiply recurrent. The key ingredient is that, via an application of Szemeredi's Theorem, any set $A\in\overline{\mathcal {BD}}$ satisfies that for each $m>0$ there is $k$ such that $A$ contains infinitely many arithmetic progressions of the same step $k$ and length $m$.

\begin{definition}
We will say that $A\in \overline {\AP}$ provided that for every $m$ there is $k$ such that $A$ has infinitely many arithmetic progressions of step $k$ and length $m$.
\end{definition}

Thus, a close look to the proof of \cite[Theorem 3.8]{CosPar12} shows that it can be stated in the following form.
\begin{theorem}[Costakis-Parissis]\label{teo costakis}
Let $(\lambda_n)_n$ be a sequence of complex numbers such that $\frac{\lambda_{n+\tau}}{\lambda_n}\to 1$ for some $\tau$. Then 
$$
(\lambda_nT^n)_n \textrm{ has property } \mathcal P_{\overline {\AP}} \quad\Rightarrow\quad T \textrm{ is  multiply recurrent}.
$$
\end{theorem}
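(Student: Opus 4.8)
The plan is to verify multiple recurrence of $T$ directly: for an arbitrary nonempty open set $W$ and an arbitrary length $m$ I will produce a common difference $q$ with $W\cap T^{-q}(W)\cap\dots\cap T^{-mq}(W)\neq\emptyset$. First I would fix $p\in W$ and $r>0$ with $\{y:\|y-p\|<2r\}\subset W$, and apply the hypothesis to the smaller ball $U:=\{y:\|y-p\|<r\}$: property $\mathcal P_{\overline{\AP}}$ yields a vector $x$ such that the weighted return set $A:=\{n:\lambda_nT^nx\in U\}$ belongs to $\overline{\AP}$. Note that $\lambda_{n+\tau}/\lambda_n\to1$ forces $\lambda_n\neq0$ for all large $n$, which legitimizes the divisions below.

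The decisive idea is to locate inside $A$ arithmetic progressions whose common difference is a multiple of $\tau$, so that the weights can be absorbed. To this end I would invoke the definition of $\overline{\AP}$ for the \emph{inflated} length $m\tau+1$ rather than $m$: this provides a step $k$ and infinitely many progressions $\{a,a+k,\dots,a+m\tau k\}\subset A$. Keeping only every $\tau$-th term turns each of them into a progression $\{a,a+q,\dots,a+mq\}\subset A$ of length $m+1$ and \emph{fixed} common difference $q:=\tau k$, while the initial terms $a$ range over an infinite, hence unbounded, set.

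Fix one such initial term $b:=a$. For $0\le j\le m$ we have $v_{b,j}:=\lambda_{b+jq}T^{b+jq}x\in U$; setting $w_b:=\lambda_bT^bx$ and $c_{b,j}:=\lambda_{b+jq}/\lambda_b$ this reads $v_{b,j}=c_{b,j}T^{jq}w_b$, together with $w_b=v_{b,0}\in U$. Here the choice $q=\tau k$ is exactly what is needed: the ratio $c_{b,j}=\prod_{i=0}^{jk-1}\lambda_{b+(i+1)\tau}/\lambda_{b+i\tau}$ is a product of at most $mk$ factors of the form $\lambda_{n+\tau}/\lambda_n$ with $n\to\infty$ as $b\to\infty$, so $c_{b,j}\to1$ as $b\to\infty$, uniformly for $j\le m$. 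Since $U$ is bounded, $\|T^{jq}w_b-v_{b,j}\|=|c_{b,j}^{-1}-1|\,\|v_{b,j}\|\to0$, so for $b$ large enough $T^{jq}w_b$ lies within distance $r$ of $v_{b,j}\in U$ for every $j\le m$, whence $T^{jq}w_b\in\{\|\cdot-p\|<2r\}\subset W$. Thus $z:=w_b\in U\subset W$ satisfies $T^{jq}z\in W$ for all $0\le j\le m$, which is precisely $z\in W\cap T^{-q}(W)\cap\dots\cap T^{-mq}(W)$; as $W$ and $m$ were arbitrary, $T$ is multiply recurrent.

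The main obstacle, and the thing the whole argument is built to overcome, is the presence of the weights $\lambda_n$: a progression in $A$ only gives $\lambda_{b+jq}T^{b+jq}x\in U$, not that $T^{b+jq}x$ itself returns near $w_b$. Neutralizing the weights rests on two points. The first is combinatorial: requesting length $m\tau+1$ guarantees a sub-progression of difference $\tau k$, a multiple of $\tau$, so that $c_{b,j}$ collapses to a \emph{fixed} number of near-$1$ factors and tends to $1$ --- this is exactly where the hypothesis $\lambda_{n+\tau}/\lambda_n\to1$ is used. The second is analytic: converting $c_{b,j}\to1$ into genuine topological proximity of $T^{jq}w_b$ to $U$. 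In the normed setting this is immediate from boundedness of $U$, and it is the only genuinely analytic step of the proof; in a non-normable Fr\'echet space it requires more care, since balls need not be bounded, and one must instead exploit that scalar multiplication by $c_{b,j}\to1$ tends to the identity uniformly on the bounded orbit segments involved.
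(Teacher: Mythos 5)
Your proof is correct and is essentially the argument the paper itself gives: although Theorem \ref{teo costakis} is stated by appeal to the proof of \cite[Theorem 3.8]{CosPar12}, the remark closing the final section contains precisely your reasoning --- inflate the progression length by the factor $\tau$ to extract a sub-progression of step $\tau k$, use the infinitely many progressions of that fixed step (this is where $\overline{\AP}$ rather than $\AP$ is essential, cf.\ Corollary \ref{No ejemplo AP}) to send the initial term $a\to\infty$ so that the ratios $\lambda_{a+j\tau k}/\lambda_a\to 1$, and absorb the weights to conclude $\lambda_a T^a x\in\bigcap_{i=0}^{m}T^{-i\tau k}(U)$, i.e.\ condition \eqref{equivalencia P_AP estilo puig}, which immediately gives multiple recurrence. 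The only difference is cosmetic: you make explicit the boundedness-of-balls estimate (and the non-normable Fr\'echet caveat) that the paper's choice of $V+B_\delta\subset U$ uses implicitly.
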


We show next that, for a single operator, all these forms of recurrence are equivalent.
\begin{proposition}\label{equivalencia overline AP}
\begin{itemize}
\item[$(a)$] The following are equivalent.
\begin{itemize}
    \item[(i)] $T$ is multiply recurrent.
    \item[(ii)] $T$  has property $\mathcal P_{{\AP}}.$
    \item[(iii)] $T$ is $\AP$-recurrent.
    \item[(iv)] $T$ has property  $\mathcal P_{\overline {\AP}}.$
    \item[(v)] $T$ is $\overline\AP$-recurrent.
\end{itemize}

\item[$(b)$]  $T$ is $\AP$-hypercyclic operator if and only if $T$ is $\overline {\AP}$- hypercyclic.
\end{itemize}
\end{proposition}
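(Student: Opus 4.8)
The plan is to reduce everything to the inclusion $\overline{\AP}\subseteq\AP$ together with a single mechanism that upgrades one long arithmetic progression in a return set to infinitely many progressions sharing the same step. First I would record that $\overline{\AP}\subseteq\AP$: if $A\in\overline{\AP}$ then for each $m$ some step $k$ produces (infinitely many, in particular at least one) progression of length $m$, so $A$ has arbitrarily long progressions. This makes $(v)\Rightarrow(iii)$, $(iv)\Rightarrow(ii)$ and the implication ``$\overline{\AP}$-hypercyclic $\Rightarrow\AP$-hypercyclic'' immediate. The equivalence $(i)\Leftrightarrow(iii)$ is precisely Proposition \ref{multiple rec implica AP-rec}, and $(ii)\Rightarrow(i)$ is direct: given $U$ and $m$, property $\mathcal P_{\AP}$ yields $x$ with $N_T(x,U)\in\AP$, hence a progression $\{a,a+k,\dots,a+mk\}\subseteq N_T(x,U)$, and then $y=T^ax$ lies in $U\cap T^{-k}U\cap\dots\cap T^{-mk}U$. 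Thus the genuine content is the passage from the ``$\AP$'' notions to the ``$\overline{\AP}$'' ones, which I would isolate as a lemma.

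The mechanism is the following. Suppose $N_T(x,U)$ contains a progression $\{a,a+k,\dots,a+Mk\}$ with $M\ge m$, and set $z=T^ax\in U$, so that $T^{jk}z\in U$ for $0\le j\le M$. By continuity of the finitely many operators $T^{jk}$ there is an open neighborhood $W$ of $z$ with $W\subseteq U$ and $T^{jk}(W)\subseteq U$ for all $0\le j\le m$. If $z$ visits $W$ infinitely often, say $T^{c_i}z\in W$ with $c_i\to\infty$, then $T^{c_i+jk}z=T^{jk}(T^{c_i}z)\in T^{jk}(W)\subseteq U$ for every $0\le j\le m$, so that $\{a+c_i,\,a+c_i+k,\,\dots,\,a+c_i+mk\}\subseteq N_T(x,U)$. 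These are infinitely many progressions of common step $k$ and length $m+1$, whence $N_T(x,U)\in\overline{\AP}$. So the whole problem reduces to guaranteeing that the shifted point $z=T^ax$ returns to $W$ infinitely often.

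For the recurrence statements I would take $x$ in the dense set of $\AP$-recurrent vectors provided by Proposition \ref{multiple rec implica AP-rec}. Such an $x$ is recurrent (an $\AP$-set is infinite), and from $T^{n_i}x\to x$ we get $T^{n_i}(T^ax)\to T^ax$ by continuity, so the shifted point $z=T^ax$ is again recurrent; since $W$ is a neighborhood of $z$, it follows that $z$ visits $W$ infinitely often. The lemma then gives $N_T(x,U)\in\overline{\AP}$ for every open $U\ni x$, so $x$ is $\overline{\AP}$-recurrent; as the $\AP$-recurrent vectors are dense, $T$ is $\overline{\AP}$-recurrent, proving $(iii)\Rightarrow(v)$, and $(v)\Rightarrow(iv)$ is immediate by choosing an $\overline{\AP}$-recurrent vector inside each open set. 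For part $(b)$ I would instead take $x$ an $\AP$-hypercyclic vector and run the same lemma with $U$ an arbitrary nonempty open set; here $z=T^ax$ is hypercyclic (the orbit of $x$ with finitely many initial terms deleted is still dense), and the orbit of a hypercyclic vector meets every nonempty open set infinitely often, so $N_T(z,W)$ is infinite; the lemma then yields $N_T(x,U)\in\overline{\AP}$ for every nonempty open $U$, i.e. $x$ is $\overline{\AP}$-hypercyclic.

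Assembling the cycle $(i)\Leftrightarrow(iii)\Rightarrow(v)\Rightarrow(iv)\Rightarrow(ii)\Rightarrow(i)$ finishes part $(a)$, and the two inclusions finish part $(b)$. The step I expect to demand the most care is exactly the hypothesis of the lemma that the shifted point returns infinitely often: this is where a genuinely dynamical input enters (recurrence of $x$, or density of the tail orbit of a hypercyclic vector) rather than a purely combinatorial manipulation of $\AP$-sets. Indeed the passage from $\AP$-sets to $\overline{\AP}$-sets is false at the level of families (a set with progressions of unbounded length but pairwise distinct, widely separated steps lies in $\AP$ but not in $\overline{\AP}$), so the orbit structure of $T$ must be used in an essential way.
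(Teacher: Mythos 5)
Your proof is correct and rests on the same key mechanism as the paper's: a single long progression in a return set is thickened by continuity to a neighborhood whose iterates $T^{jk}$ stay inside the target set, and then infinitely many returns to that neighborhood replicate the progression with the same step $k$, yielding membership in $\overline{\AP}$. The only minor difference is that the paper runs this argument around $x$ itself, invoking the vectors constructed in the proof of Proposition \ref{multiple rec implica AP-rec} (whose return sets contain progressions with initial term $0$), whereas you shift the base point to $z=T^ax$ and use that iterates of recurrent (resp.\ hypercyclic) vectors are again recurrent (resp.\ hypercyclic), so your argument needs only the statement, not the proof, of that proposition.
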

\begin{proof}
We prove $(a)$, the proof of $(b)$ is similar.
Clearly, (iii)$\Rightarrow$(ii)$\Rightarrow$(i)  and (v)$\Rightarrow$(iv)$\Rightarrow$(ii). 

Hence, by Proposition \ref{multiple rec implica AP-rec}, (i), (ii) and (iii) are  equivalent. 

It remains to show that (iii)$\Rightarrow$(v).
Let $U$ be a nonempty open set. By (the proof of) Proposition \ref{multiple rec implica AP-rec}, there is an $\AP$-recurrent vector $x\in U$ such that for every neighbourhood $V$ of $x$ and any $m$, there exists $k$ such that 
$$
x,T^kx,\dots,T^{km}x\,\in V.
$$
Thus, there is an open set $V'\subset V$ such that $T^{jk}V'\subset V$ for $0\le j\le m$. Moreover, since $x$ is a recurrent vector, there is a sequence $(n_l)_l$ such that $T^{n_l}x\in V'$. This implies that $x$ is an $\overline\AP$-recurrent vector because for $j\le m$ and any $l,$
$$
T^{n_l+jk}x\,\in T^{jk}V'\subset V.
$$
 \end{proof}
 This shows that we may see Costakis-Parissis' result (Theorem \ref{teo costakis}) as a result about property $\mathcal P_{\overline {\AP}}$.
 \begin{corollary}
 Let $(\lambda_n)_n$ be a sequence of complex numbers such that $\frac{\lambda_{n+\tau}}{\lambda_n}\to 1$ for some $\tau$. Then 
$$
(\lambda_nT^n)_n \textrm{ has property } \mathcal P_{\overline {\AP}} \quad\Rightarrow\quad T \textrm{ has property } \mathcal P_{\overline {\AP}}.
$$
 \end{corollary}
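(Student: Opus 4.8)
The plan is to chain together the two results that immediately precede this corollary, so that the proof reduces to a single observation. First I would invoke Theorem \ref{teo costakis} (Costakis--Parissis): since by hypothesis $(\lambda_n)_n$ satisfies $\frac{\lambda_{n+\tau}}{\lambda_n}\to 1$ for some $\tau$ and $(\lambda_nT^n)_n$ has property $\mathcal P_{\overline{\AP}}$, that theorem yields directly that the operator $T$ is multiply recurrent.

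Next I would apply part $(a)$ of Proposition \ref{equivalencia overline AP}, which establishes that for a single operator the five recurrence notions listed there all coincide. In particular the implication $(i)\Rightarrow(iv)$ asserts that multiple recurrence of $T$ forces $T$ to have property $\mathcal P_{\overline{\AP}}$. Feeding the conclusion of the first step into this implication gives exactly the desired conclusion, namely that $T$ has property $\mathcal P_{\overline{\AP}}$, and the proof is complete.

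There is no genuine obstacle here once both earlier results are in hand; the entire content of the corollary is concentrated in the single-operator equivalence $(i)\Leftrightarrow(iv)$ of Proposition \ref{equivalencia overline AP}. This is worth emphasizing because Costakis--Parissis on its own only delivers \emph{multiple recurrence} of $T$, a notion that is a priori strictly weaker than property $\mathcal P_{\overline{\AP}}$: the latter demands, for each length $m$, infinitely many arithmetic progressions sharing a \emph{common} step $k$, whereas multiple recurrence merely requires arbitrarily long progressions with possibly varying steps. The upgrade from the weaker to the stronger statement is precisely what Proposition \ref{equivalencia overline AP} supplies for a single operator, and this is also what makes the corollary a meaningful reformulation rather than a mere restatement of Theorem \ref{teo costakis}. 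I would therefore write the proof in two short sentences, citing \cite{CosPar12} for the first step and Proposition \ref{equivalencia overline AP} for the second.
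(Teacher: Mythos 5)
Your proof is correct and follows exactly the paper's (implicit) argument: the corollary is stated there as an immediate consequence of chaining Theorem \ref{teo costakis} with the single-operator equivalence (i)$\Leftrightarrow$(iv) of Proposition \ref{equivalencia overline AP}(a). Your emphasis on where the real content lies (the upgrade from multiple recurrence to property $\mathcal P_{\overline{\AP}}$, supplied by the proposition) matches the paper's own framing of the corollary as a reformulation of the Costakis--Parissis result.
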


It is now natural to ask  if  property $\mathcal P_{\overline {\AP}}$ 
can be replaced by property $\mathcal P_{{\AP}}$ in the above corollary, i.e. is it true that if 
$\frac{\lambda_{n+\tau}}{\lambda_n}\to 1$ for some $\tau$,
$$
(\lambda_nT^n)_n \textrm{ has property } \mathcal P_{{\AP}} \quad\Rightarrow\quad T\textrm{ has property }\mathcal P_{ {\AP}}\,?
$$

We will now prove that this is not true.
First we need the following proposition.
\begin{proposition}\label{prop lambda B PAP}
Let $X=\ell_p$, $B$ the backward shift and $\lambda_n\to \infty$ such that $\frac{\lambda_{j'k}}{\lambda_{jk}}\to 0$ as $k\to\infty,$  for every $j>j'$. Then $(\lambda_n B^n)_n$ has the $\mathcal P_{\AP}$ property and it is $\AP$-universal.
\end{proposition}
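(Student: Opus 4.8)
The plan is to prove the stronger assertion, namely $\AP$-universality (the existence of a single vector $x$ with $\{n:\lambda_nB^nx\in U\}\in\AP$ for every nonempty open $U$), by a Baire category argument; property $\mathcal P_{\AP}$ is then immediate, since one universal vector witnesses it for every open set. Write $(e_i)_{i\ge0}$ for the canonical basis, so that $(\lambda_nB^nx)_i=\lambda_n x_{i+n}$. Fixing a countable base $(V_l)_l$ of nonempty open sets, for each $l,m$ I set
$$\mathcal{O}_{l,m}=\{x\in\ell_p:\ \{n:\lambda_nB^nx\in V_l\}\ \text{contains an arithmetic progression of length }m\}.$$
Each $\mathcal{O}_{l,m}$ is open, because demanding of a fixed progression $\{a,a+k,\dots,a+(m-1)k\}$ that $\lambda_{a+ik}B^{a+ik}x\in V_l$ for all $i$ is an open condition, and $\mathcal{O}_{l,m}$ is the union of these conditions over all progressions. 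Once each $\mathcal{O}_{l,m}$ is shown to be dense, $\bigcap_{l,m}\mathcal{O}_{l,m}$ is residual; any $x$ in it satisfies $\{n:\lambda_nB^nx\in U\}\in\AP$ for every nonempty open $U$, since $U\supseteq V_l$ for some $l$ and $\AP$ is hereditary upward.

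The heart of the matter is a single \emph{block construction}, carried out with the specific progression $\{k,2k,\dots,mk\}$ (initial term equal to common difference). Given a finitely supported target $y=\sum_{i=0}^d y_ie_i$ and a length $m$, for $k>d$ I define $w$ on the distinct positions $\{i+jk:\ 0\le i\le d,\ 1\le j\le m\}$ by $w_{i+jk}=y_i/\lambda_{jk}$. A direct coordinate computation gives $(\lambda_{jk}B^{jk}w)_i=y_i$ for $0\le i\le d$, while the only nonzero coordinates of $\lambda_{jk}B^{jk}w-y$ sit at positions $i'+(j'-j)k\ge k$ with $1\le j<j'\le m$ and equal $y_{i'}\lambda_{jk}/\lambda_{j'k}$ (block entries with $j'\le j$ produce nothing above index $d$, those with $j'<j$ being annihilated by $B^{jk}$). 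Hence
$$\|\lambda_{jk}B^{jk}w-y\|^p=\|y\|^p\sum_{j'>j}\Big(\tfrac{\lambda_{jk}}{\lambda_{j'k}}\Big)^p,\qquad \|w\|^p=\|y\|^p\sum_{j=1}^m\lambda_{jk}^{-p}.$$
The hypothesis $\lambda_{j'k}/\lambda_{jk}\to0$ for $j>j'$ forces the first quantity to $0$ as $k\to\infty$ (uniformly in $j\le m$, a finite maximum), while $\lambda_n\to\infty$ forces $\|w\|\to0$. Thus for large $k$ one has $\|w\|<\delta$ and $\lambda_{jk}B^{jk}w\in B_X(y,\epsilon)$ for all $1\le j\le m$.

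To prove density of $\mathcal{O}_{l,m}$ I use that finitely supported vectors are dense, so it suffices to approximate an arbitrary finitely supported $x_0$ with $\operatorname{supp}(x_0)\subseteq\{0,\dots,N\}$. Choosing $y\in V_l$ finitely supported with $B_X(y,\epsilon)\subseteq V_l$ and then $k>\max\{N,d\}$ as above, the block $w$ is supported on indices $\ge k>N$; therefore $x_0$ and $w$ have disjoint supports and, crucially, $B^{jk}x_0=0$ for every $j\ge1$. Consequently $\lambda_{jk}B^{jk}(x_0+w)=\lambda_{jk}B^{jk}w\in V_l$ for $1\le j\le m$, so $\{k,2k,\dots,mk\}$ is an arithmetic progression of length $m$ in the return set, while $\|(x_0+w)-x_0\|=\|w\|$ is as small as desired. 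Hence $x_0+w\in\mathcal{O}_{l,m}$, proving density.

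The main obstacle — and the reason for restricting the density step to finitely supported $x_0$ — is the term $\lambda_{jk}B^{jk}x_0$: because $\lambda_n\to\infty$, it need not be small even though $B^{jk}x_0\to0$, so an arbitrary $x_0$ cannot simply be perturbed; truncating to a finitely supported vector makes this term vanish identically once $jk>N$. The only other delicate point, controlling the tail junk produced by the block at coordinates $\ge k$, is precisely what the ratio hypothesis $\lambda_{j'k}/\lambda_{jk}\to0$ is designed to annihilate.
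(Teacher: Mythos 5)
Your proof is correct and rests on essentially the same construction as the paper's: your block vector $w=\sum_{j=1}^m S^{jk}(y)/\lambda_{jk}$ is exactly the paper's test vector $\tilde y=\sum_{j=0}^{m} S^{jk}(y)/\lambda_{jk}$ minus its $j=0$ term, with the same estimates (the off-block terms killed by the ratio hypothesis, and $\|w\|\to 0$ from $\lambda_n\to\infty$). The only difference is bookkeeping: the paper verifies the density-type statement for balls around $c_{00}$ vectors and dismisses the upgrade to $\AP$-universality as ``follows similarly,'' whereas you carry out that Baire-category upgrade explicitly via the open dense sets $\mathcal O_{l,m}$, which is a more complete write-up of the same argument.
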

Applying the above proposition with  $\lambda_n=e^{\sqrt n}$ we have that  $(\lambda_n B^n)_n$ has property $\mathcal P_{\AP}$. On the other hand, since the backward shift is not multiply recurrent, we conclude. 
\begin{corollary}\label{No ejemplo AP}
There exist an operator $T$ and a sequence $(\lambda_n)_n$ such that  $\f{\lambda_{n+1}}{\lambda_n}\to 1$, $(\lambda_n T^n)_n$ has property $\mathcal P_\AP$ but $T$ is not multiply recurrent.
\end{corollary}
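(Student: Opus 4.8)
The plan is to obtain the corollary as an immediate specialization of Proposition~\ref{prop lambda B PAP}, taking $T=B$ to be the unweighted backward shift on $\ell_p$ and choosing the explicit weight $\lambda_n=e^{\sqrt n}$. Three things must then be checked: that $\lambda_{n+1}/\lambda_n\to1$, that $(\lambda_n B^n)_n$ has property $\mathcal P_{\AP}$, and that $B$ is not multiply recurrent. None of these is difficult; the whole content of the corollary is that all three can hold simultaneously, so that the $\AP$-analogue of the Costakis--Parissis implication fails.

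First I would verify the two weight conditions. For the ratio condition, $\lambda_{n+1}/\lambda_n=e^{\sqrt{n+1}-\sqrt n}$, and since $\sqrt{n+1}-\sqrt n=1/(\sqrt{n+1}+\sqrt n)\to0$ we get $\lambda_{n+1}/\lambda_n\to1$; this computation also records $\lambda_n\to\infty$, which is one of the hypotheses of Proposition~\ref{prop lambda B PAP}. For the remaining hypothesis of that proposition I would compute, for fixed $j>j'$, that $\lambda_{j'k}/\lambda_{jk}=e^{\sqrt k\,(\sqrt{j'}-\sqrt j)}$; since $j>j'$ gives $\sqrt{j'}-\sqrt j<0$, the exponent tends to $-\infty$ and hence $\lambda_{j'k}/\lambda_{jk}\to0$ as $k\to\infty$. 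Proposition~\ref{prop lambda B PAP} then applies verbatim and yields that $(\lambda_n B^n)_n$ has property $\mathcal P_{\AP}$ (indeed that it is $\AP$-universal).

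It remains to see that $B$ itself is not multiply recurrent, and here I would invoke Theorem~\ref{equiv backward AP}: for a backward shift with $B(e_{n+1})=e_n$, multiple recurrence is equivalent to condition (iii), which in particular requires $e_{n_k}\to0$ along some $\AP$-sequence $(n_k)_k$. In $\ell_p$ one has $\|e_n\|=1$ for every $n$, so no subsequence of $(e_n)_n$ can converge to $0$; thus condition (iii) fails and $B$ is not multiply recurrent. The only point meriting any attention is the simultaneous fit of the two weight conditions, and $e^{\sqrt n}$ is tailored precisely for this: the slowly varying increment $\sqrt{n+1}-\sqrt n\to0$ forces $\lambda_{n+1}/\lambda_n\to1$, while the $\sqrt k$ gap appearing in $\sqrt{jk}-\sqrt{j'k}$ forces the decay $\lambda_{j'k}/\lambda_{jk}\to0$. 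Beyond balancing these two requirements there is no genuine obstacle, since the substance is already contained in Proposition~\ref{prop lambda B PAP} and Theorem~\ref{equiv backward AP}, both of which I am entitled to assume.
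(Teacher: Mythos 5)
Your proposal is correct and takes essentially the same route as the paper, which likewise obtains the corollary by applying Proposition~\ref{prop lambda B PAP} with $\lambda_n=e^{\sqrt n}$ and noting that the backward shift on $\ell_p$ is not multiply recurrent. Your explicit verifications --- $\lambda_{n+1}/\lambda_n=e^{\sqrt{n+1}-\sqrt n}\to1$, $\lambda_{j'k}/\lambda_{jk}=e^{\sqrt k(\sqrt{j'}-\sqrt j)}\to0$ for $j>j'$, and the failure of condition (iii) of Theorem~\ref{equiv backward AP} because $\|e_n\|_p=1$ --- are all accurate and merely spell out details the paper leaves implicit.
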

Note that, in contrast to Proposition \ref{equivalencia overline AP}, 
Corollary \ref{No ejemplo AP} shows that for  families of operators properties $\mathcal P_{\AP}$ and $\mathcal P_{\overline{\AP}}$ are not equivalent.
\begin{proof}[Proof of Proposition \ref{prop lambda B PAP}]

Let $U=B(y,\varepsilon)$ an open ball of radius $\varepsilon>0$ where $y\in c_{00}$. Let $m$ be any natural number and $k=k(m)$ to be determined.
Let $T_n=\lambda_n B^n$.

We consider $\tilde y= \sum_{j=0}^{m} \frac{S^{jk}(y)}{\lambda_{jk}},$ where $S$ is the forward shift and we have adopted the convention $\lambda_0:=1.$

Let $0\leq l\leq m$.
If $k>supp(y)$ we have that
\begin{align*}
\lambda_{lk}T_{lk}(\tilde y)&=y +\sum_{j=l+1}^{m} \f{\lambda_{lk}S^{(j-l)k}(y)}{\lambda_{jk}}
\end{align*}

Therefore, if $k$ is big enough so that
 $|\f{\lambda_{j'k}}{\lambda_jk}|<\frac{\varepsilon}{m \|y\|}$ for every $ j'<j\le m$,
then
we have that for every $l\leq m$,
$$\|\lambda_{lk}T^{lk}(\tilde y)-y\|=\|\sum_{j=l+1}^{m} \f{\lambda_{lk}S^{(j-l)k}(y)}{\lambda_{jk}}\|<\varepsilon.$$
The proof of $\AP$-universality follows similarly.
\end{proof}

\begin{remark}
In \cite{Pui18}, a sequence $(\lambda_nT^n)_n$ satisfying property $\mathcal P_{\overline{\mathcal {BD}}}$ was characterized  in terms of a special kind of recurrence for $T$ called topologically $\mathcal D$-recurrence with respect to $(\lambda_n)_n$.

Let us consider the following type of recurrence with respect to $(\lambda_n)_n$ (which is similar, but simpler, to topologically $\mathcal D$-recurrence):
for each $U$, there exists  $x$ satisfying that for each  $m$, there is some $k$ such that 
\begin{align}\label{equivalencia P_AP estilo puig}
    {card}\left\{a:\lambda_aT^ax\in\bigcap_{i=0}^mT^{-ik}(U)\right\}=\infty.
\end{align}
Then, it is easy to show that, for a sequence $(\lambda_n)_n$ with 
$\frac{\lambda_{n+\tau}}{\lambda_n}\to 1$ for some $\tau$, 
\begin{center}
$(\lambda_nT^n)_n$ has property $\mathcal P_{\overline {\AP}}$ if and only if  \eqref{equivalencia P_AP estilo puig} holds.
\end{center}
\end{remark}
Indeed, suppose that $(\lambda_nT^n)_n$ has property $\mathcal P_{\overline {\AP}}$. Let $U$ be an open set. Let $M\in\mathbb N$, take $\delta>0$ and $V$ a nonempty open set such that $V+B_\delta\subset U$, and let $x$ be a vector satisfying  $\{n\in\mathbb N: \lambda_nT^nx\in U\}\in\overline {\AP}$.  Then in particular, there is $k$ such that for infinitely many $a$'s,
\begin{align*}
    \lambda_{a+i\tau k}T^{a+i k}x\in V,\quad\textrm{for }0\le i\le \tau M & \Rightarrow \frac{\lambda_{a+i\tau k}}{\lambda_a}\lambda_{a}T^{a+i\tau k}x\in V,\quad\textrm{for }0\le i\le M\\
    &\Rightarrow \lambda_{a}T^{a+i\tau k}x\in U,\quad\textrm{for }0\le i\le M \textrm{ and  }a\ge a(U,\delta,M,\tau)\\
    &\Rightarrow \lambda_{a}T^{a}x\in \bigcap_{i=0}^M T^{-i\tau k}U,\quad\textrm{for }0\le i\le M \textrm{ and }a\ge a(U,\delta,M,\tau).\\
\end{align*}
 The proof of the converse is similar.
 

  For any sequence $(\lambda_n)_n,$ the multiple recurrence of $T$ is clearly implied by the  recurrence defined in \eqref{equivalencia P_AP estilo puig}. Thus, the implication proved in the above remark, together with Szemeredi's Theorem, provides a simpler proof of \cite[Theorem 3.8]{CosPar12}, although the main idea is the same.

\end{document}